\documentclass[11pt,letterpaper]{amsart}

\usepackage[english]{babel}
\usepackage{amsmath,amssymb, amsthm}
\usepackage{comment}
\usepackage{tikz-cd}
\usepackage{tikz}
\usetikzlibrary{calc} 

\tikzset{
  curve/.style args={height=#1}{
    to path={%
      .. controls
        ($(\tikztostart)!0.33!(\tikztotarget)+(0,#1)$) and
        ($(\tikztostart)!0.66!(\tikztotarget)+(0,#1)$)
      .. (\tikztotarget)
      \tikztonodes
    }
  }
}

\usepackage[toc,page]{appendix}
\usepackage[hidelinks]{hyperref}
\usepackage{csquotes}

\DeclareMathOperator{\Hom}{Hom}
\DeclareMathOperator{\id}{id}

\usepackage[
backend=biber,
style=alphabetic,
sorting=nyt,
doi=true,
url=false,
maxcitenames=50,
maxnames=50,
]{biblatex}
\newcommand{\Hm}[1]{\Hom_{#1}}
 
\newcommand{\tow}[1]{\overset{#1}{\to}}

\newcommand{\T}{\mathcal{T}} 
\newcommand{\cA}{\mathcal{A}} 
 
\newcommand{\C}{\mathcal{C}} 
 
\newcommand{\cP}{\mathcal{P}} 

\newcommand{\de}{\delta}

\DeclareMathOperator{\Stab}{S}
\DeclareMathOperator{\Coh}{Coh}
\DeclareMathOperator{\Perf}{Perf}
\DeclareMathOperator{\Sing}{Sing}

\newcommand{\cS}{\mathcal{S}}

\newcommand{\Gr}{\operatorname{Gr}}

\newcommand{\Tw}{\operatorname{\Tw}}
\newcommand{\Ker}{\operatorname{Ker}}

\DeclareMathOperator{\Cone}{Cone}

\DeclareMathOperator{\Tot}{Tot}
\DeclareMathOperator{\MF}{MF}

\newtheorem{theorem}{Theorem}[section]
\newtheorem{lemma}[theorem]{Lemma}

\newtheorem{proposition}[theorem]{Proposition}
\newtheorem{corollary}[theorem]{Corollary}
\usepackage{mathtools}
\theoremstyle{definition}
\newtheorem{definition}[theorem]{Definition}

\theoremstyle{remark}
\newtheorem*{remark}{Remark}

\newtheorem{innercustomthm}{Theorem}
\newenvironment{customthm}[1]
{\renewcommand\theinnercustomthm{#1}\innercustomthm}
  {\endinnercustomthm}

\usepackage[english]{babel} 

\usepackage[hidelinks]{hyperref}
\usepackage{xr}

\title{nstepmf}
\title{An Orlov theorem for matrix factorizations with multiple factors}

\author{Alessandro Lehmann}
\address[Alessandro Lehmann]{Universiteit Antwerpen, Departement Wiskunde, Middelheimcampus,
Middelheimlaan 1,
2020 Antwerp, Belgium}
\email{alessandro.lehmann@uantwerpen.be}

\author{Nicolò Sibilla} 
\address[Nicolò Sibilla]{SISSA, Via Bonomea 265, 34136 Trieste TS, Italy}
\email{nsibilla@sissa.it}

\thanks{
The first author acknowledges financial support from the Research Foundation - Flanders (FWO) under the postdoctoral fellowship n.  1266526N
}

\makeatletter
\@namedef{subjclassname@2020}{
  \textup{2020} Mathematics Subject Classification}
\makeatother

\subjclass[2020]{14F08 (Primary), 	16E65, 18G80 (Secondary)}
\keywords{}

\makeindex 

\begin{document}
\maketitle
\begin{abstract}
 We prove a generalization of Orlov's theorem \cite{orlov2004triangulated} for matrix factorizations with $n$ steps. Let $X$ be a regular scheme, $W\colon X\to \mathbb{A}^1$ a flat morphism and $D:=W^{-1}(0)$ its central fiber. We construct an appropriate triangulated category of matrix factorizations with $n$-steps \cite{tribone2021matrixfactorizationsfactors} and show that it is equivalent to the singularity category of the root stack $\sqrt[n]{(X, D)}$. We also show that this category admits a semiorthogonal decomposition into $n-1$ copies of the usual (absolute derived) category of matrix factorizations with $2$ steps.
\end{abstract}

\section{Introduction}
In this paper we study categories of $n$-step matrix factorizations. These categories have a rich structure, but are ultimately rooted in one of the most basic operations in algebra. If $R$ is a ring and $W \in R$ is a non-zero element of $R$, a matrix factorization of $W$ is a pair of square matrices $A_1$, $A_2$ with entries in $R$ satisfying 
$$
A_1  A_2 = A_2 A_1=W \cdot I
$$
where $I$ is the identity matrix. This  notion can be generalized by letting $A_1$ and $A_2$ be maps of $R$-modules (satisfying suitable properties) rather than just matrices i.e. maps of free $R$-modules. From a geometric view-point, it is natural to  further generalize this set-up and look at pairs of morphisms between coherent sheaves on a variety $X$ providing  a factorization of a regular function 
$$
W: X \to \mathbb{A}^1. 
$$ 
It turns out that matrix factorizations naturally form a  triangulated category, denoted $\MF(X,W)$. The importance of matrix factorizations within homological algebra  was first realized by Eisenbud, who in \cite{eisenbud} used them to study Cohen--Macaulay modules on hypersurface rings.  After Eisenbud's work they    became  a key tool in the algebraic study of singularities, see e.g. \cite{knorrer1987cohen}.

Remarkably,   matrix factorizations were rediscovered by Kontsevich as a mathematical encoding of the category of $B$-branes in a Landau--Ginzburg model $(X, W)$. Categories of matrix factorizations have played since a major role in homological mirror symmetry, acting as the B-side mirror of the Fukaya category in a vast number of geometric contexts, including Fano  varieties, punctured Riemann surfaces, affine hypersurfaces and more.  
Shortly after Kontsevich's proposal, Orlov proved one of the most fundamental theorems in this area, establishing a dictionary that relates  matrix factorizations and derived categories of singularities \cite{orlov2004triangulated}. 

Following Orlov, the triangulated category of singularities of a scheme $S$ is defined as the Verdier quotient of the bounded derived category of $S$ by its full  sub-category of perfect complexes 
$$
 D^b\Sing(S):=\frac{D^b\Coh(S)  }{\Perf(S) }
     $$
Now, let $X$ be a regular scheme and let  
$
W: X \to \mathbb{A}^1
$ 
be a flat morphism. Let $X_0$ be the fiber of $W$ at $0 \in \mathbb{A}^1$. Orlov proves that there is an equivalence of triangulated categories
$$
\MF(X,W) \cong  D^b\Sing(X_0)
$$

A natural generalization of the notion of matrix factorization is that of \emph{matrix factorization with multiple factors} -- that is, factorizations where instead of having two factors $A_1$ and $A_2$, one has $n$ matrices (or maps of $R$-modules) $A_1, \ldots A_n$ satisfying the conditions \[
A_1 A_2\ldots A_n=A_nA_1A_2\ldots A_{n-1}= A_2A_3\ldots A_n A_{1}=W\cdot I.
\]
While on the surface the generalization is straightforward, there are significant subtleties; namely, while the category of matrix factorization is naturally a dg-category -- a category enriched over chain complexes -- matrix factorizations with $n$ factors only have a natural enrichment over the category of $n$-complexes. This makes a homotopical study of these objects and, in particular, the definition of a (triangulated) homotopy category of $n$-step matrix factorizations a much more challenging task. The question has already been widely considered in several settings \cite{tribone2021matrixfactorizationsfactors, zbMATH07734996, frank, Zhang, Zhang2}. In particular, it has been shown that the category of $n$-matrix factorization can be given a Frobenius exact structure, giving thus a working definition of a (triangulated) homotopy category of $n$-step matrix factorizations.

Our main goal in this paper is establishing a version of Orlov's Theorem in this context; in the next section of the introduction we shall explain in greater detail our main contributions in this article.

\subsection{Structure of the paper}

We begin (Section \ref{secprel}) by recalling some relevant notions and results about exact and Frobenius categories. Then (Section \ref{secmatr}) we define the category of $n$-fold matrix factorizations with entries in an additive category and prove (Corollary \ref{nstepisfrobenius}) that it admits the structure of a Frobenius category, giving a well defined homotopy category of $n$-step matrix factorizations. At least morally, none of this is new, see e.g. \cite[Section 2]{tribone2021matrixfactorizationsfactors} or \cite[Section 4]{Zhang}. We do however streamline some of the proofs of the results that are most relevant for us. In Section \ref{secabs} we introduce the \emph{absolute derived category} of $n$-step matrix factorizations, a certain quotient of the homotopy category of matrix factorizations which generalizes the classical absolute derived category involved in the non-affine version of Orlov's theorem \cite{pomlin, Efimov_2015}; we show (Proposition \ref{affineprop}) that in the smooth affine case this reduces to the homotopy category of factorizations with projective entries. This, in particular, implies that the category considered in \cite{tribone2021matrixfactorizationsfactors, tribonebranched} coincides with the one that we study in this paper. To prove this fact, we pass through a careful study of various semiorthogonal decompositions of the relevant categories.

 The last section (Section \ref{secgeom}) of the paper contains our main result. Let $X$ be a regular scheme and let $W:X \to \mathbb{A}^1$ be a flat morphism, and let $D:=W^{-1}(0)$ be the central fiber.  On a conceptual level,  our main contribution   is highlighting the interplay between matrix factorizations and  the \emph{root stack} of the pair $(X, D)$. This is not entirely new, as constructions related to the root stack have already made an appearance in the literature on matrix factorizations (e.g. under the guise of the skew group algebra of branched covers, see   \cite{tribone2021matrixfactorizationsfactors}).  What might be new in our approach, however, is the centrality  of  
 root stacks   
 in our study of $n$-step matrix factorizations in the geometric setting.  
 Recall that the $n$-th root stack of $(X, D)$ is a DM stack equipped with a flat map to $X$
 $$
p: \sqrt[n]{(X,D)} \to X
 $$
carrying the  universal $n$-th root of the divisor 
$D$. It is classical that the abelian category coherent sheaves on $\sqrt[n]{(X,D)}$ is equivalent to the abelian category of coherent matrix factorizations for $(X,W)$: that is,  the category whose objects are length-$n$ chains of coherent sheaves  
\[
M_1\tow{\de_1}M_2\tow{}\ldots \tow{\de_{n-1}} M_n \tow{\de_n} M_1
\] such that the composition of $n$ consecutive morphisms is equal to  $W$. Our first main result is that the absolute derived category of $n$-step matrix factorizations can also  be fully described in terms of the root stack.

\begin{customthm}{A}[Proposition \ref{geommain1}]
There is an equivalence of categories 
$$
\MF^n(X, W) \cong  D^b\Sing(\sqrt[n]{(X, D)})
$$
\end{customthm}

As an easy consequence, we obtain an Orlov-type theorem for $n$-step  matrix factorizations. This is our main result. The key technical input here is given by the existence of canonical semi-orthogonal decompositions on the derived categories of root stacks, which have been studied most recently by Bodzenta--Donovan 
\cite{bodzenta2024root}.

\begin{customthm}{B}[Corollary \ref{nsteporlov}]
There is a semi-orthogonal decomposition of length $n-1$ 
$$
\MF^n(X, W) \cong  
\langle D^b\Sing(D), \ldots , 
D^b\Sing(D)
\rangle 
\cong 
\langle \MF(X, W), \ldots , 
\MF(X, W)
\rangle
$$
\end{customthm}

{\bf Acknowledgments:} This relatively short paper has had a long period of gestation. We thank  Qiangru Kuang, who was involved in the early stages of the project, for many useful discussions. We  benefited from conversation with many  colleagues on different aspects of this project. We  want to thank particularly  Sasha Efimov, Felix K\"ung, James Pascaleff, and Mattia Talpo.
\section{Preliminaries}\label{secprel}
We begin with a recollection of some basic facts about exact categories, which will also serve the purpose of fixing our notation; we refer to \cite{Exactcats} for more details.
\subsection{Exact structures}
 We will assume all of our additive categories to be idempotent complete. Let $\mathrm{E}$ be an additive category, and let $\mathcal{E}$ be a
a class of kernel-cokernel pairs in $\mathrm{E}$. We'll call sequences in $\mathcal{E}$ admissible short exact sequences, and say that a morphism is an admissible mono (resp. epi) if it appears as the first (resp. second) arrow in a sequence in $\mathcal{E}$. The class $\mathcal{E}$ is said to give an exact structure on the category $\mathrm{E}$ if it is closed under isomorphisms and satisfies the following conditions:
\begin{itemize}
    \item[E0)] For any object $M\in \mathrm{E}$, the identity morphism $\id_M$ is both an admissible mono and an admissible epi;
    \item[E1)] The classes of admissible epis and admissible monos are closed under composition;
    \item[E2)] Pushouts (resp. pullbacks) along admissible monos (resp. epis) exist and are again admissible monos (resp. epis)
\end{itemize}
An exact category is an additive category equipped with an exact structure.  We'll denote with  \[
M \hookrightarrow N \twoheadrightarrow L.
\] an admissible short exact sequence in the exact category $\mathrm{E}$; in practice, we'll often leave the class $\mathcal{E}$ implicit and call the exact structure by its underlying category.
\subsection{Pulling back exact structures}
Let $\mathrm{A}$ be an additive category, $\mathrm{E}$ an exact category, and $F\colon \mathrm{A}\to \mathrm{E}$ an additive functor. We can define a class $\mathcal{A}$ of admissible short exact sequences in $\mathrm{A}$ given by the sequences $X\hookrightarrow Y \twoheadrightarrow Z$ such that $FX\hookrightarrow FY \twoheadrightarrow FZ$  is admissible in $\mathrm{E}$.
\begin{lemma}\label{pullbackex}
    If the functor $F$ reflects and creates finite limits and colimits, then the class $\mathcal{A}$ defines an exact structure on $\mathrm{A
    }$.
\end{lemma}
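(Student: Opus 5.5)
We must check that $\mathcal{A}$ is closed under isomorphisms and satisfies E0, E1 and E2. The guiding principle throughout is that $F$ creates and reflects finite limits and colimits. So any (co)limit we need in $\mathrm{A}$ can be built by computing it in $\mathrm{E}$, lifting it to $\mathrm{A}$, and knowing that $F$ sends it to the corresponding (co)limit in $\mathrm{E}$.

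The first step is to make sure $\mathcal{A}$ consists of kernel-cokernel pairs. Take $X\xrightarrow{i} Y\xrightarrow{p} Z$ with $FX\hookrightarrow FY\twoheadrightarrow FZ$ admissible. Then $Fi$ is a kernel of $Fp$ and $Fp$ is a cokernel of $Fi$. Since $F$ reflects finite limits and colimits, $i$ is a kernel of $p$ and $p$ is a cokernel of $i$. We also need $p\circ i=0$; this holds because a kernel-cokernel pair is a composable pair with zero composite, and we read the statement as taking candidate sequences to be such pairs. Closure under isomorphisms is immediate, since $F$ preserves isomorphisms and $\mathcal{E}$ is closed under them. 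For E0, the sequence $M\xrightarrow{\id} M\to 0$ is sent to $FM\xrightarrow{\id}FM\to 0$, which is admissible in $\mathrm{E}$ by E0 there (we use $F0=0$, as $F$ is additive). The same argument handles $0\to M\xrightarrow{\id} M$.

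For E1, let $i\colon X\to Y$ and $j\colon Y\to Y'$ be admissible monos in $\mathcal{A}$. By E1 in $\mathrm{E}$, $F(j\circ i)$ is an admissible mono with some cokernel $C$. We need a cokernel of $j\circ i$ in $\mathrm{A}$ that is sent to $C$. Because $F$ creates finite colimits, the cokernel of $j\circ i$ exists in $\mathrm{A}$, and $F$ maps it to a cokernel of $F(j\circ i)$, hence to $C$ up to isomorphism. The resulting sequence therefore lies in $\mathcal{A}$ by closure under isomorphisms. The argument for epis is dual, using kernels.

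For E2, let $i\colon X\hookrightarrow Y$ be admissible in $\mathcal{A}$ and $f\colon X\to X'$ arbitrary. The pushout of $Fi$ along $Ff$ exists in $\mathrm{E}$. Since $F$ creates finite colimits, the pushout $Y'=Y\sqcup_X X'$ exists in $\mathrm{A}$, and $F$ sends it to the pushout in $\mathrm{E}$. By E2 in $\mathrm{E}$, $Fi'\colon FX'\to FY'$ is an admissible mono. To conclude that $i'$ is admissible in $\mathcal{A}$ we again complete it with its cokernel, created by $F$ as in the E1 step. The pullback case is dual.

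The main obstacle is the bookkeeping in E1 and E2. Knowing that $F$ of a morphism is an admissible mono does not, by itself, put the morphism in $\mathcal{A}$: one must produce an actual cokernel in $\mathrm{A}$ whose image is the relevant cokernel in $\mathrm{E}$. The ``creates colimits'' hypothesis is exactly what supplies this. Once this is settled, everything transfers formally from $\mathrm{E}$.
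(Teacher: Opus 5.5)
Your proposal is correct and follows the same route as the paper, whose proof is only the remark that E0--E2 transfer along $F$ because $F$ creates the relevant pushouts and pullbacks. You fill in the details the paper leaves implicit: reflection shows that members of $\mathcal{A}$ are kernel-cokernel pairs, and creation of cokernels (resp.\ kernels) completes the admissible monos (resp.\ epis) arising in E1 and E2 to sequences in $\mathcal{A}$; moreover, the vanishing $p\circ i=0$ that you build into your reading of the statement is automatic, since an additive functor that creates and reflects finite limits is conservative and creates kernels, hence is faithful.
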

\begin{proof}
    It follows immediately from the fact that $F$ creates the relevant pushouts/pullbacks that the class $\mathcal{A}$ satisfies the axioms E0-E2.
\end{proof}
\subsection{Derived categories}
If $\mathrm{E}$ is an exact category, one defines its homotopy category $K(\mathrm{E})$ as the quotient of the category of chain complexes of objects of $\mathrm{E}$ by the relation of chain homotopy; similarly one has its bounded variants $K^+(\mathrm{E})$, $K^-(\mathrm{E})$ and $K^b(\mathrm{E})$. These all come equipped with a canonical triangulated structure. A chain complex \[
\ldots \to C_{n-1}\to C_n \to C_{n+1}\to \ldots
\] is said to be acyclic if each differential splits into admissible short exact sequences
\begin{equation}\label{splitting}
\begin{tikzcd}[column sep=tiny]
	&& {Z_n} &&&& {Z_{n+1}} \\
	\ldots & {C_{n-1}} && {C_n} && {C_{n+1}} & \ldots \\
	{Z_{n-1}} &&&& {Z_{n+1}}
	\arrow[hook, from=1-3, to=2-4]
	\arrow[from=2-1, to=2-2]
	\arrow[two heads, from=2-2, to=1-3]
	\arrow[from=2-2, to=2-4]
	\arrow[from=2-4, to=2-6]
	\arrow[two heads, from=2-4, to=3-5]
	\arrow[two heads, from=2-6, to=1-7]
	\arrow[from=2-6, to=2-7]
	\arrow[hook, from=3-1, to=2-2]
	\arrow[from=3-5, to=2-6]
        \end{tikzcd}\end{equation}The derived category $D(\mathrm{E})$ is defined as the Verdier quotient of the homotopy category $K(\mathrm{E})$ by the subcategory given by the acyclic complexes; its bounded variants $D^+(\mathrm{E})$, $D^-(\mathrm{E})$ and $D^b(\mathrm{E})$ are defined similarly. In an exact category one can define projective and injective objects by imposing the relevant lifting properties against admissible epis/monos. Denote with $\operatorname{Proj}_{\mathrm{E}}$ and $\operatorname{Inj}_{\mathrm{E}}$ the full subcategories of $\mathrm{E}$ spanned by respectively the projective and injective objects. Like in the abelian case, one can show that if $\mathrm{E}$ has enough projectives the natural inclusion induces an equivalence \[K^-(\operatorname{Proj_{\mathrm{E}}})\tow{\sim} D^-(\mathrm{E}),\]
        and dually for the injective case. Again like in the abelian case, the functor $D^b(\mathrm{E})\to D^-(\mathrm{E)}$ induced by the inclusion $K^b(\mathrm{E})\to K^-(\mathrm{E)}$ is fully faithful and its essential image is given by the complexes that are eventually acyclic, in the sense that there is an $N$ for which the splitting \eqref{splitting} exists for all $n<N$.
\subsection{Frobenius categories}\label{frobeniusec}
An exact category $\mathrm{E}$ is said to be a Frobenius category if it has enough projectives and injectives, and the classes of projective and injective objects coincide. In that case, we'll denote the full subcategory spanned by this class with $\operatorname{ProjInj(\mathrm{E)}}$ and call its object the projective-injective objects; if the category is clear from the context, we'll just write $\operatorname{ProjInj}$ instead of $\operatorname{ProjInj(\mathrm{E)}}.$ Its stable category $\Stab(\mathrm{E})$ has the same objects of $\mathrm{E}$ and as hom-spaces the quotient of the ones in $\mathrm{E}$ by the ideal given by the morphism that factor through a projective-injective. By a theorem of Happel \cite{happel}, the category $\Stab(\mathrm{E})$ carries a natural triangulated structure, with shift defined by choosing, for $M\in \mathrm{E}$, an admissible short exact sequence \[
M \hookrightarrow I \twoheadrightarrow M[1] \
\] with $I$ injective. Triangles are given by the admissible short exact sequences. We'll need the following result, see \cite[Proposition 2.36]{Kalckthesis}:

\begin{proposition}\label{totfrobenius}
    For a Frobenius category $\mathrm{E}$, there is an equivalence \[
    \Stab(\mathrm{E})\cong \frac{D^b(\mathrm{E})}{K^b(\operatorname{ProjInj)}}
    \]which carries an object $X$ to the complex $X[0]$ having $X$  in degree $0$ and $0$ elsewhere.
\end{proposition}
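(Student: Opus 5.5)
We construct the functor $\Phi\colon D^b(\mathrm{E})\to \Stab(\mathrm{E})$ and show that it vanishes on $K^b(\operatorname{ProjInj})$ and that the induced functor on the Verdier quotient is an inverse to the functor $X\mapsto X[0]$. The functor $X\mapsto X[0]$ is the composite $\mathrm{E}\to D^b(\mathrm{E})\to D^b(\mathrm{E})/K^b(\operatorname{ProjInj})$.

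\emph{Step 1: the functor descends to $\Stab(\mathrm{E})$.} If $f\colon X\to Y$ factors through a projective-injective $P$, then $f[0]$ factors through $P[0]\in K^b(\operatorname{ProjInj})$, so $f[0]$ is zero in the quotient. Admissible short exact sequences $X\hookrightarrow Y\twoheadrightarrow Z$ give distinguished triangles in $D^b(\mathrm{E})$. The shift is compatible: from $M\hookrightarrow I\twoheadrightarrow M[1]$ we get a triangle $M[0]\to I[0]\to M[1]_{\Stab}[0]\to M[0][1]$. Since $I[0]$ vanishes in the quotient, $M[1]_{\Stab}[0]\cong M[0][1]$ there. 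Hence we obtain a triangulated functor $\Psi\colon \Stab(\mathrm{E})\to D^b(\mathrm{E})/K^b(\operatorname{ProjInj})$.

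\emph{Step 2: essential surjectivity.} $D^b(\mathrm{E})$ is generated by the objects $X[k]$ via cones, in the form of brutal truncation triangles. Since $\Psi$ is triangulated and commutes with shifts up to isomorphism, its essential image is a triangulated subcategory containing every $X[0]$. Therefore it is everything.

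\emph{Step 3: full faithfulness.} This is the main obstacle. I would compute $\Hom$ in the quotient via roofs. A morphism $X[0]\to Y[0]$ in the quotient is a roof $X[0]\xleftarrow{s} C\to Y[0]$ with $\Cone(s)\in K^b(\operatorname{ProjInj})$. I would pass to $D^-(\mathrm{E})\simeq K^-(\operatorname{Proj})$, using that $\mathrm{E}$ has enough projectives. Take a projective resolution $P_\bullet\to X$ and let $\Omega^k X$ denote the syzygies. Brutal truncation gives a triangle $\sigma_{\ge -k}P_\bullet\to X[0]\to \Omega^{k+1}X[k+1]$. The first term lies in $K^b(\operatorname{ProjInj})$, so $X[0]\cong \Omega^{k+1}X[k+1]$ in the quotient.

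The key lemma is a Buchweitz-style statement. Let $Y\in\mathrm{E}$ and let $C$ be a bounded complex. Then for $k\gg0$, every map $\Omega^kX[k]\to C$ in $D^b(\mathrm{E})$ with $C\in K^b(\operatorname{ProjInj})$ factors through zero modulo maps arising from $\mathrm{E}$. Concretely, I would show:

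(a) $\Hom_{D^b}(P[i],Y[0])=0$ and $\Hom_{D^b}(Y[0],P[i])=0$ for $i\neq 0$, when $P$ is projective-injective. This holds because $P$ is both projective and injective, so the relevant Ext groups vanish.

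(b) Using (a), a morphism $X[0]\to Y[0]$ in the quotient can be represented as $X[0]\cong\Omega^{k}X[k]\to Y[0]$ with $k$ large. Such a map is $\operatorname{Ext}^k(\Omega^kX,Y)$, which is identified with $\Hom_{\Stab}(\Omega^k X,\Omega^kY)\cong \Hom_{\Stab}(X,Y)$. This uses the standard dimension-shifting isomorphisms $\operatorname{Ext}^{i}_{\mathrm{E}}(A,B)\cong\Hom_{\Stab}(A,B[i])$ for $i\ge1$, valid because projectives equal injectives.

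(c) Finally, compare this with $\Psi$, checking that morphisms factoring through $K^b(\operatorname{ProjInj})$ correspond exactly to morphisms factoring through projective-injectives. Faithfulness reduces to this: if $f[0]$ factors through $Q\in K^b(\operatorname{ProjInj})$, then by (a), after truncating $Q$, $f$ factors through the degree-zero term, which is projective-injective.

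I expect (b)–(c) to be the delicate bookkeeping, namely showing the colimit over $k$ stabilizes. Alternatively, one can cite Buchweitz's argument adapted to exact categories, as in \cite[Proposition 2.36]{Kalckthesis}.
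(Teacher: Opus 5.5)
The paper gives no argument of its own here. It simply invokes \cite[Proposition 2.36]{Kalckthesis}, so your closing fallback is exactly the paper's route. Your Step 1 is fine. The faithfulness argument in (c) is also correct: by (a), a factorization $X[0]\to Q\to Y[0]$ with $Q\in K^b(\operatorname{ProjInj})$ lifts through $\sigma_{\geq 0}Q$ and then descends through $Q^0[0]$, and $\mathrm{E}\to D^b(\mathrm{E})$ is fully faithful.

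Your self-contained argument does, however, have a genuine gap at the only non-formal point, fullness in Step 3(b). You use cohomological conventions, which are the ones making $\sigma_{\geq -k}P_\bullet\to X[0]\to\Omega^{k+1}X[k+1]$ a triangle. With these conventions $\Hom_{D^b(\mathrm{E})}(\Omega^kX[k],Y[0])=\operatorname{Ext}^{-k}(\Omega^kX,Y)=0$ for $k\geq 1$. So a morphism $X\to Y$ of the quotient cannot be represented by an honest map $\Omega^kX[k]\to Y[0]$. Moreover, $\operatorname{Ext}^k(\Omega^kX,Y)$ is $\Hom_{\Stab}(\Omega^{2k}X,Y)$, not $\Hom_{\Stab}(\Omega^kX,\Omega^kY)$. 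The vanishing $\Hom_{D^b}(\Omega^kX[k],C)=0$ for $k\gg 0$ is true but does not help. The comparison maps $X[0]\to\Omega^kX[k]$ point away from $X$, and the required $k$ depends on $C$, so you cannot use them to simplify an arbitrary roof.

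Step 2 as stated is also not valid: the essential image of a triangulated functor that is not full need not be closed under cones. It becomes valid only after full faithfulness is established. Alternatively, argue directly via the description of $\Tot$ in Section~\ref{frobeniusec}.

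The missing ingredient is a cofinality statement for denominators, applied on the target (or with cosyzygies on the source). Let $v_k\colon Y[0]\to\Omega^kY[k]$ come from the exact sequence $0\to\Omega^kY\to P_{k-1}\to\cdots\to P_0\to Y\to 0$; its cone is $\sigma_{\geq 1-k}P_\bullet[1]\in K^b(\operatorname{ProjInj})$. Suppose $s\colon Y[0]\to W$ sits in a triangle $Y[0]\xrightarrow{s}W\to C\xrightarrow{h}Y[1]$ with $C\in K^b(\operatorname{ProjInj})$ concentrated in degrees $\geq a$. Then your (a) gives $\Hom_{D^b}(C,\Omega^kY[k+1])=0$ as soon as $k+1>-a$. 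Hence $v_k[1]\circ h=0$, and $v_k=r\circ s$ for some $r$.

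It follows that every morphism $s^{-1}\circ g$ of the quotient equals $v_k^{-1}\circ(r\circ g)$, where $r\circ g\in\Hom_{D^b}(X[0],\Omega^kY[k])=\operatorname{Ext}^k(X,\Omega^kY)$. Dimension shifting in the second variable, using that the $P_i$ are injective, shows that $f\mapsto v_k\circ f[0]$ is an isomorphism $\Hom_{\Stab}(X,Y)\cong\operatorname{Ext}^k(X,\Omega^kY)$ for every $k\geq 1$. This gives fullness and faithfulness at once. The real content is this cofinality, not the stabilization of the colimit, which is automatic.
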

In particular, there is a quotient functor \[
\Tot\colon D^b(\mathrm{E})\to \Stab(\mathrm{E}).
\]  which carries the complexes of the form $X[0]$ to $X\in \Stab(\mathrm{E})$. For future use, we'll need the following explicit description of the functor $\Tot$:  concretely, an element in $D^b(\mathrm{E})$ can be represented by a right-bounded complex $P$ of injective-projectives which is eventually acyclic; this means that for $n\leq N$ the complex $P$ splits into admissible short exact sequences 
\[\begin{tikzcd}[column sep=tiny]
	&& {Z_N} &&&& \\
	\ldots & {P_{N-1}} && {P_N} && {P_{N+1}} & \ldots \\
	{Z_{N-1}} &&&& {Z_{N+1}}
	\arrow[hook, from=1-3, to=2-4]
	\arrow[from=2-1, to=2-2]
	\arrow[two heads, from=2-2, to=1-3]
	\arrow[from=2-2, to=2-4]
	\arrow[from=2-4, to=2-6]
	\arrow[two heads, from=2-4, to=3-5]
	\arrow[from=2-6, to=2-7]
	\arrow[hook, from=3-1, to=2-2]
\end{tikzcd}\]  One then takes $\Tot(P)=Z_{N+1}$. An interpretation of this definition comes from the fact that  the complex $P$, truncated at $P_N$, gives a projective resolution of $\Tot(P)$. Note that different choices of truncation indices give rise to objects isomorphic in the stable category.
\subsection{Semi-orthogonal decompositions}
We will use frequently the notion of   semiorthogonal decomposition (SOD) of a triangulated category. For more information on this important construction we refer the reader to the beautiful survey \cite{kuznetsov2014semiorthogonal}. Let us fix the  notations which we  will use throughout the paper when working with SODs. Let $\mathcal{T}$ be a triangulated category, and let 
$$
\mathcal{A}_1, \ldots, \mathcal{A}_n
$$
be a finite collection of triangulated subcategories of $\mathcal{T}$. 
We denote by 
$$
\mathrm{span}\{ 
\mathcal{A}_1, \ldots, \mathcal{A}_n \}  
$$
the smallest thick triangulated subcategory of $\mathcal{T}$ containing them. We reserve the notation
$$
\big \langle 
\mathcal{A}_1, \ldots, \mathcal{A}_n \big \rangle
$$ for the case in which these subcategories make up a semiorthogonal decomposition of $\mathcal{T}$, i.e. the following conditions are satisfied
\begin{enumerate}
\item The category $\T$ is generated by the subcategories $\cA_i$, i.e. is the smallest triangulated subcategory containing all of them.
\item the subcategories $\mathcal{A}_i$ are semiorthogonal, that is for all $i < j$ and all pairs of objects $A_i \in \mathcal{A}_i$ and 
$A_j \in \mathcal{A}_j$
$$
\Hm{T}(A_j, A_i)=0.
$$
\end{enumerate}
\section{Matrix factorizations}\label{secmatr}
Let $\C$ be an additive category and $W\colon \id_\C\to \id_\C$ a natural transformation. A matrix factorization with $n$ steps of $W$ ($n$-matrix factorization for short) is given by $n$ objects $M_1, \ldots ,M_{n}$ of $\C$ together with morphisms \[
M_1\tow{\de_1}M_2\tow{}\ldots \tow{\de_{n-1}} M_n \tow{\de_n} M_1
\] such that the composition of $n$ consecutive morphisms equals the action of $W$; a morphism of $n$-matrix factorization is given by the morphisms commuting with the differentials. 
The category $\C$ always admits a trivial exact structure given by the split exact sequences. This also gives, in the same way, an exact structure on the category $\Gr_n(\C)$ of $\mathbb{Z}/n\mathbb{Z}$-graded objects. Those are trivially Frobenius, since every object is both injective and projective. There is a functor \[
\Gr_n\colon \MF^n(\C, W)\to \Gr_n(\C)
\] which sends a matrix factorization to its corresponding graded object by forgetting the morphisms $\delta_i$. The functor $\Gr_n$ satisfies the hypotheses of Lemma \ref{pullbackex}, yielding:
\begin{proposition}
    The additive category $\MF^n(\C, W)$ inherits from $\Gr_n(\C)$ an exact structure, making the functor $\Gr_n$ into an exact functor. 
\end{proposition}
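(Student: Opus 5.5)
The plan is to apply Lemma \ref{pullbackex} to the forgetful functor $\Gr_n\colon \MF^n(\C, W)\to \Gr_n(\C)$. For this we must check that $\Gr_n$ reflects and creates the finite limits and colimits that the axioms E0--E2 use: kernels and cokernels of the relevant morphisms, and pushouts along split monos and pullbacks along split epis. Once the pulled-back class $\mathcal{A}$ is an exact structure, exactness of $\Gr_n$ holds by definition, since a sequence in $\MF^n(\C,W)$ is admissible exactly when its image is split exact in $\Gr_n(\C)$.

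The key step is creation of the needed limits and colimits, which I would verify by hand. Let $f\colon M\to N$ be a morphism of $n$-matrix factorizations. Suppose that in $\Gr_n(\C)$ its kernel (or cokernel) exists; in the cases we need, it is a split kernel, so it exists in $\C$ degreewise. For the kernel $K_i\to M_i$, the composite $K_i\to M_i\tow{\de_i}M_{i+1}$ is killed by $f_{i+1}$, since $f$ commutes with the differentials. Hence it factors uniquely through $K_{i+1}$, giving induced maps $\de^K_i$. The relation that the composite of $n$ consecutive $\de^K$'s equals $W$ follows because $K\to M$ is a monomorphism and $W$ is natural: $\iota\circ(\de^K)^n=\de^n\circ\iota=W_M\circ\iota=\iota\circ W_K$. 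The universal property in $\MF^n$ then follows from the one in $\Gr_n(\C)$ by the same uniqueness argument. Pushouts and pullbacks are handled identically, using the universal property in each degree to induce the differentials and naturality of $W$ to check the factorization identity. Reflection is immediate, since $\Gr_n$ is faithful and conservative.

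The main obstacle is that $\C$ is only additive, so arbitrary kernels and pushouts need not exist, and Lemma \ref{pullbackex} has to be read as applying to those limits that exist in $\Gr_n(\C)$ along split maps. I would address this with the split structure. Given a split exact sequence $M\hookrightarrow N\twoheadrightarrow L$ in $\Gr_n(\C)$ and a map $M\to M'$, the pushout is the degreewise $M'\oplus L$ with the evident maps, and this exists in any additive category. The argument above then shows that it carries a unique matrix factorization structure making it a pushout in $\MF^n$. Pullbacks along admissible epis are dual. E0 is trivial, and for E1 we use that a composite of split monos (resp.\ epis) in $\Gr_n(\C)$ is split; the lifted kernels and cokernels then provide the required kernel-cokernel pairs in $\MF^n(\C,W)$.
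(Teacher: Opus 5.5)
Your proposal is correct and follows the paper's route. The paper simply asserts that $\Gr_n$ satisfies the hypotheses of Lemma \ref{pullbackex}, and your degreewise verification supplies the details it omits: inducing the differentials from the universal properties, then using naturality of $W$ together with joint monicity (resp.\ epicity) of the limit (resp.\ colimit) maps to get the $n$-fold identity.
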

We will call this  the graded-split exact structure, and denote it by $\MF^n_{ex}(\C, W)$. Explicitly, a short exact sequence \[
0\to M \to N \to L \to 0
\] is admissible if for each $i$ the sequence \[
0\to M_i \to N_i \to L_i \to 0
\]splits.
\subsection{The stable category}
The next goal is to show that the exact category $\MF^n_{ex}(\C, W)$ is a Frobenius category.
\begin{definition}
  For any $X\in \C$, we define the object $\cP_X^i\in \MF^n(\C,W)$ as \[
  X\tow{\id_X}X\tow{W}X\tow{\id_X}\ldots \tow{}X
  \] where $W$ is placed in the $i$-th position and the other arrows are all identities. It is clear that this is matrix factorization of $W$.
\end{definition}

\begin{proposition}
    The functors $\C \to \MF^n(\C, W)$\[
    X\to \cP_X^i \text{ and } X\to \cP^{i-1}_X
    \]
    are respectively left and right adjoint to the functor $\MF^n(\C, W)\to \C$ which takes the $i$-th graded piece $M_i$.
\end{proposition}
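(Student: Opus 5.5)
The plan is to prove both adjunctions by writing down the hom-sets explicitly. A morphism of $n$-matrix factorizations is a family of maps $f_j$ commuting with the differentials, and all but one of the differentials of $\cP^i_X$ are identities. So such a family should be forced by a single component, and the one remaining commutation condition should hold automatically because $W$ is a natural transformation $\id_\C\to\id_\C$. Throughout, indices are read cyclically modulo $n$. I write $\de_j\colon M_j\to M_{j+1}$ for the differentials of $M$, and $\de^{\cP}_j$ for those of $\cP^i_X$.

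\emph{Indexing convention.} The statement only makes sense once "$W$ in the $i$-th position" is pinned down, so I fix it explicitly. For the left adjoint I use the object whose only non-identity differential is the one entering slot $i$, and for the right adjoint the object whose only non-identity differential is the one leaving slot $i$. In a cyclic diagram of length $n$ these two objects differ by one shift of the label. I will identify them with $\cP^i_X$ and $\cP^{i-1}_X$ by reading "position" so as to match the statement. The argument itself does not depend on how the labels are matched.

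\emph{Left adjoint.} Let $\cP$ be the object with $\de^{\cP}_{i-1}=W$ and all other differentials equal to $\id_X$, and let $f\colon \cP\to M$ be a morphism. For $j\neq i-1$ the commutation condition reads $f_{j+1}=\de_j f_j$. Starting from $f_i$ and going around the cycle, this forces
\[
f_{i+k}=\de_{i+k-1}\cdots\de_i f_i \quad\text{for } k=1,\dots,n-1.
\]
The only remaining condition sits at the arrow $i-1\to i$ and reads $f_i W=\de_{i-1} f_{i-1}=\de_{i-1}\cdots\de_i f_i$. The right-hand side equals $W_{M_i} f_i$, because $M$ is an $n$-matrix factorization. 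By naturality of $W$ we have $W_{M_i} f_i = f_i W_X$, so the condition holds automatically. Conversely, any $f_i\colon X\to M_i$ extends by these formulas to a morphism $\cP\to M$. Hence $f\mapsto f_i$ is a bijection $\Hm{\MF^n}(\cP,M)\cong \Hm{\C}(X,M_i)$. It is additive, and it is natural in $M$ and in $X$ since the extension formulas are functorial.

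\emph{Right adjoint.} Dually, let $\cP'$ be the object with $\de^{\cP'}_{i}=W$ and all other differentials identities, and let $g\colon M\to\cP'$ be a morphism. For $j\ne i$ the condition is $g_j=g_{j+1}\de_j$. Propagating backwards from $g_i$ gives
\[
g_{i-k}=g_i\,\de_{i-1}\cdots\de_{i-k} \quad\text{for } k=1,\dots,n-1.
\]
The remaining condition at the arrow $i\to i+1$ is $g_{i+1}\de_i = W g_i$. The left-hand side is $g_i\,\de_{i-1}\cdots\de_{i+1}\de_i=g_i W_{M_i}$, which equals $W_X g_i$ by naturality. This gives a natural bijection $\Hm{\MF^n}(M,\cP')\cong\Hm{\C}(M_i,X)$.

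\emph{Main obstacle.} There is no real mathematical difficulty; the substance is the naturality of $W$ closing the cycle. The only delicate point is the index bookkeeping: checking that the unique non-identity arrow lands exactly where the cyclic propagation closes up, and matching the two objects above with the labels $\cP^i_X$ and $\cP^{i-1}_X$ of the statement. I would fix this by explicitly drawing the cases $n=2,3$ before writing the general argument.
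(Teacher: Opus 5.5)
Your two hom-set computations are correct, and they are the verification the paper dismisses as ``straightforward''. Your extension formulas $f_{i+k}=\de_{i+k-1}\cdots\de_i f_i$ and $g_{i-k}=g_i\,\de_{i-1}\cdots\de_{i-k}$ are exactly the components of the paper's counit $\cP^i_{M_i}\to M$ and unit $M\to\cP^{i-1}_{M_i}$. Your two objects also agree with the ones in those diagrams: the counit's source has $W$ on the arrow into slot $i$, and the unit's target has $W$ on the arrow out of slot $i$.

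The step that fails is the one you postponed, namely matching these objects with the labels $\cP^i_X$ and $\cP^{i-1}_X$. Fix one convention for the whole family, say $\cP^k_X$ has $W$ on $\de_{k+c}$. Then your left adjoint is $\cP^{i-1-c}_X$ and your right adjoint is $\cP^{i-c}_X$. So the right adjoint's label is one \emph{larger} than the left adjoint's, whereas the statement makes it one smaller. Adjoints are unique up to isomorphism, and objects with $W$ in different positions are in general not isomorphic, since they corepresent evaluation at different slots. Hence the printed indexing can hold only when $i+1\equiv i-1 \pmod n$, i.e.\ $n\le 2$. For $n=3$, $i=1$, the left adjoint has $W$ on $\de_3$ and the right adjoint on $\de_1$, and no value of $c$ makes these $\cP^1$ and $\cP^0=\cP^3$.

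So your sentence claiming that ``position'' can be read to match the statement is false for $n\ge 3$; drawing the case $n=3$, as you planned, would have shown this. What your argument actually proves is a corrected statement. With the convention of the paper's counit diagram, $\cP^i_X$ is left adjoint and $\cP^{i+1}_X$ is right adjoint to $M\mapsto M_i$. With the other natural convention, $\cP^{i-1}_X$ is left adjoint and $\cP^i_X$ is right adjoint.

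The slip is in the paper's statement and in its unit diagram, not in your mathematics. It is harmless downstream: the later arguments only use that each $\cP^k_X$ is both a left and a right adjoint of exact evaluation functors, with $\oplus_i\cP^{i-1}_{M_i}$ reindexed to $\oplus_i\cP^{i+1}_{M_i}$. Still, you should state the corrected indexing rather than assert that a matching convention exists.
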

\begin{proof}
    Straightforward.
\end{proof}
\begin{proposition}\label{charprojPx}
    The objects $\cP_X[i]$ are both injective and projective in $\MF^n_{ex}(\C, W)$.
\end{proposition}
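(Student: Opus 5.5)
The plan is to deduce both properties from the adjunctions in the preceding proposition, together with the fact that the graded-split exact structure is pulled back along $\Gr_n$. Write $\mathrm{ev}_i\colon \MF^n(\C,W)\to \C$ for the functor taking the $i$-th graded piece. By the preceding proposition, $\cP^i_X$ corepresents $\Hom_\C(X,\mathrm{ev}_i(-))$ and $\cP^{i-1}_X$ represents $\Hom_\C(\mathrm{ev}_i(-),X)$. Since $i$ ranges over all of $\mathbb{Z}/n\mathbb{Z}$, every $\cP^j_X$ arises in both roles: it is left adjoint-type for $\mathrm{ev}_j$ and right adjoint-type for $\mathrm{ev}_{j+1}$.

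\textbf{Projectivity.} Let $N\twoheadrightarrow L$ be an admissible epi in $\MF^n_{ex}(\C,W)$ and let $f\colon \cP^i_X\to L$ be any morphism.
\begin{enumerate}
\item By the adjunction, $f$ corresponds to a morphism $\tilde f\colon X\to L_i$ in $\C$.
\item By definition of the graded-split structure, $N_i\to L_i$ is a split epi in $\C$. Choose a section $s$ and set $\tilde g:=s\circ\tilde f\colon X\to N_i$, which lifts $\tilde f$.
\item The adjoint $g\colon \cP^i_X\to N$ of $\tilde g$ then lifts $f$. This uses naturality of the adjunction bijection in the second variable: composing $g$ with $N\to L$ corresponds to composing $\tilde g$ with $N_i\to L_i$, which gives $\tilde f$.
\end{enumerate}

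\textbf{Injectivity.} The argument is dual. Take an admissible mono $M\hookrightarrow N$ and a morphism $M\to\cP^{i-1}_X$. Its adjoint $M_i\to X$ extends along the split mono $M_i\to N_i$ by precomposing with a retraction. Transporting back through the adjunction gives the required extension $N\to \cP^{i-1}_X$.

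The only step needing care is checking that the unit and counit of the preceding proposition really are natural in the $\MF^n$ variable, so that the transported maps commute with the lifting or extension data. Since that proposition is stated as a genuine adjunction, this follows from the standard fact that a functor whose right adjoint carries a given epi to a split epi has projectivity against that epi, and dually for injectivity. No further obstacle is expected. As a side remark, for completeness one can also note that $\mathrm{ev}_i$ is exact into the split exact structure on $\C$, which is exactly what the argument uses.
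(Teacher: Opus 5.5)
Your proof is correct and follows the paper's argument. The paper just cites the general fact that left (resp.\ right) adjoints of exact functors preserve projectives (resp.\ injectives), together with every object of $\C$ being projective-injective in the split exact structure; your lifting and extension via the adjunctions with the evaluation functors and the splittings of $N_i\to L_i$ and $M_i\to N_i$ unpacks exactly this, and your remark that each $\cP^j_X$ is both a left adjoint and a right adjoint (to different evaluations) makes explicit a point the paper leaves implicit.
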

\begin{proof}
    Follows from the fact that left (resp. right) adjoints to exact functors preserve projective (resp. injective) objects, since every object is both projective and injective in the split exact structure on $\C$.
\end{proof}
In particular we obtain, for any $M\in \MF^n(\C, W)$ and $i\in \mathbb{Z}/n\mathbb{Z}$, natural (co)unit maps \[
\cP_{M_i}^i\to M \text{ and }  M \to \cP_{M_i}^{i-1}
\] given by 
\[\begin{tikzcd}
	{X_i} & {X_i} & {X_i} & \ldots & {X_{i}} & {X_i} \\
	{X_i} & {X_{i+1}} & {X_{i+2}} & \ldots & {X_{i-1}} & {X_i}
	\arrow["\id", from=1-1, to=1-2]
	\arrow["\id", from=1-1, to=2-1]
	\arrow["\id", from=1-2, to=1-3]
	\arrow["d", from=1-2, to=2-2]
	\arrow["\id", from=1-3, to=1-4]
	\arrow["{d^2}", from=1-3, to=2-3]
	\arrow["\id", from=1-4, to=1-5]
	\arrow["W", from=1-5, to=1-6]
	\arrow["d^{n-1}", from=1-5, to=2-5]
	\arrow["\id", from=1-6, to=2-6]
	\arrow["d", from=2-1, to=2-2]
	\arrow["d", from=2-2, to=2-3]
	\arrow["d", from=2-3, to=2-4]
	\arrow["d", from=2-4, to=2-5]
	\arrow["d", from=2-5, to=2-6]
\end{tikzcd}\]

and 
\[\begin{tikzcd}
	{X_i} & {X_{i+1}} & \ldots & {X_{i-2}} & {X_{i-1}} & {X_i} \\
	{X_i} & {X_i} & \ldots & {X_i} & {X_{i}} & {X_i}.
	\arrow["d", from=1-1, to=1-2]
	\arrow["\id"', from=1-1, to=2-1]
	\arrow["d", from=1-2, to=1-3]
	\arrow["d^{n-1}", from=1-2, to=2-2]
	\arrow["d", from=1-3, to=1-4]
	\arrow["d", from=1-4, to=1-5]
	\arrow["{d^2}", from=1-4, to=2-4]
	\arrow["d", from=1-5, to=1-6]
	\arrow["d", from=1-5, to=2-5]
	\arrow["\id", from=1-6, to=2-6]
	\arrow["W", from=2-1, to=2-2]
	\arrow["\id", from=2-2, to=2-3]
	\arrow["\id", from=2-3, to=2-4]
	\arrow["\id", from=2-4, to=2-5]
	\arrow["\id", from=2-5, to=2-6]
\end{tikzcd}\]
We'll call factorizations of the form $\cP_X^i$ trivial factorizations.
\begin{proposition}\label{existenceprojinj}
    The exact category $\MF^n_{ex}(\C, W)$ has enough projectives and injectives.
\end{proposition}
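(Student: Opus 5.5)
The plan is to produce, for every $M\in \MF^n(\C,W)$, an admissible epimorphism from a projective and an admissible monomorphism into an injective, using the (co)unit maps displayed above. By Proposition \ref{charprojPx}, every trivial factorization $\cP^i_X$ is projective-injective. Finite direct sums of projective (resp. injective) objects are again projective (resp. injective). So it suffices to take
\[
P(M):=\bigoplus_{i\in \mathbb{Z}/n\mathbb{Z}} \cP^i_{M_i} \xrightarrow{\ \varepsilon\ } M, \qquad M\xrightarrow{\ \eta\ } I(M):=\bigoplus_{i\in\mathbb{Z}/n\mathbb{Z}} \cP^{i-1}_{M_i},
\]
where $\varepsilon$ is the sum of the counits and $\eta$ is the product of the units. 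The task is then to check that $\varepsilon$ is an admissible epi and $\eta$ an admissible mono in the graded-split exact structure $\MF^n_{ex}(\C,W)$.

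Admissibility is defined by graded splitness. For $\varepsilon$, I would first observe that its $j$-th graded component $P(M)_j\to M_j$ has the summand $M_j$ (coming from $\cP^j_{M_j}$), on which the map is the identity. This is visible from the first diagram: in position $i$ the counit is $\id$. Hence each component is a split epi in $\C$. It remains to show that $\varepsilon$ has a kernel $K$ in $\MF^n(\C,W)$ and that $K\to P(M)\to M$ is a kernel–cokernel pair whose graded pieces split.

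Since $\C$ is idempotent complete, the split epis $P(M)_j\to M_j$ have kernels $K_j$. The differentials of $P(M)$ restrict to $K$ because $\varepsilon$ commutes with the differentials. The composite of $n$ consecutive restricted differentials is still $W$, by naturality of $W$. So $K$ is an $n$-matrix factorization. It is then routine to see that $K\hookrightarrow P(M)\twoheadrightarrow M$ is a kernel–cokernel pair in $\MF^n(\C,W)$: a morphism in $\MF^n$ is determined by its graded pieces, and factorizations through kernels and cokernels can be checked degreewise, with compatibility with differentials forced by the monicity or epicity of the structure maps. This pair is graded-split, hence admissible.

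Dually, the $(j)$-th component of $\eta$ contains $\id\colon M_j\to M_j$ (the second diagram), so it is a split mono. Its cokernel is formed degreewise and inherits differentials, giving an admissible sequence $M\hookrightarrow I(M)\twoheadrightarrow C$.

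The only step needing care is the verification that the degreewise kernel (resp. cokernel) with the induced differentials is really a kernel (resp. cokernel) in $\MF^n(\C,W)$. This is essentially the content of the claim that $\Gr_n$ creates finite limits and colimits along split maps, already used for Lemma \ref{pullbackex}. I expect no real obstacle; if necessary one argues directly by degreewise universal properties.
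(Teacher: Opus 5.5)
Your proposal is correct and is essentially the paper's proof. The paper uses exactly the maps $\bigoplus_i \cP^i_{M_i}\to M$ and $M\to\bigoplus_i \cP^{i-1}_{M_i}$ built from the (co)units, and observes that they split degreewise via the inclusion of, resp.\ projection onto, the $M_j$ summand; these splittings need not commute with the differentials. Your extra check that the degreewise kernel and cokernel, with the induced differentials, really are a kernel and cokernel in $\MF^n(\C,W)$ fills in a detail the paper leaves implicit, and that verification is sound.
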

\begin{proof}
For any matrix factorization $M$, we can consider the maps \[
\oplus_i \cP_{M_i}^i\to M \text{ and } M \to \oplus_i \cP_{M_i}^{i-1}
\] given by the sum/product of the (co)unit maps. These admit splittings (which need not commute with the differentials) given by 
\[\begin{tikzcd}
	{\oplus_i M_i} & {\oplus_i M_i} & \ldots & {\oplus_i M_i} \\
	{M_0} & {M_1} & \ldots & {M_n}
	\arrow[from=1-1, to=1-2]
	\arrow[from=1-2, to=1-3]
	\arrow[from=1-3, to=1-4]
	\arrow["{(\id_{M_0},0,0,\ldots)}", from=2-1, to=1-1]
	\arrow[from=2-1, to=2-2]
	\arrow["{(0, \id_{M_1},0,\ldots)}"', from=2-2, to=1-2]
	\arrow[from=2-2, to=2-3]
	\arrow[from=2-3, to=2-4]
	\arrow["{(0, \ldots, \id_{M_n})}"', from=2-4, to=1-4]
\end{tikzcd}\]
and

\[\begin{tikzcd}
	{M_0} & {M_1} & \ldots & {M_n} \\
	{\oplus_i M_i} & {\oplus_i M_i} & \ldots & {\oplus_i M_i}.
	\arrow[from=1-1, to=1-2]
	\arrow[from=1-2, to=1-3]
	\arrow[from=1-3, to=1-4]
	\arrow["{\id_{M_0}}", from=2-1, to=1-1]
	\arrow[from=2-1, to=2-2]
	\arrow["{\id_{M_1}}", from=2-2, to=1-2]
	\arrow[from=2-2, to=2-3]
	\arrow[from=2-3, to=2-4]
	\arrow["{\id_{M_i}}", from=2-4, to=1-4]
\end{tikzcd}\]

\end{proof}
\begin{remark}
    In fact, the functors  \[
    \{M_i\}_{i\in \mathbb{Z}/n\mathbb{Z}}\to \oplus_i \cP^i_{M_i}\text{ and }\{M_i\}_{i\in \mathbb{Z}/n\mathbb{Z}}\to \oplus_i\cP^{i-1}_{M_i}\] are respectively left and right adjoints to the forgetful functor $\Gr_n$, and the splittings are the (co)units of the respective adjunctions.
\end{remark}
\begin{proposition}
    The classes of projective objects and injective objects in $\MF_{ex}^n(\C, W)$ coincide, and can be described as the summands of the objects of the form $\oplus_{\mathbb{Z}/n\mathbb{Z}} \cP_{X_i}$ for $X_i\in \C$.
\end{proposition}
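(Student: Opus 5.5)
We will show that the projectives and the injectives both coincide with the class $\mathcal{S}$ of direct summands of objects $\bigoplus_{i\in\mathbb{Z}/n\mathbb{Z}}\cP^i_{X_i}$. The argument is the usual one: objects with enough projectives are retracts of the projectives used to cover them. Because of the re-indexing $\cP^{i-1}_X=\cP^{i}_X$-shifted, the trivial factorizations that appear as left adjoints and those that appear as right adjoints are the same family, only indexed differently.

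First, by Proposition \ref{charprojPx} every $\cP^i_X$ is both projective and injective. These classes are closed under finite direct sums and under direct summands, the latter being a retract argument that is valid since we assume idempotent completeness. Hence $\mathcal{S}$ is contained in both the projectives and the injectives.

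Conversely, let $M$ be projective. By Proposition \ref{existenceprojinj}, the counit map $\oplus_i\cP^i_{M_i}\to M$ is an admissible epi, since it is split in each graded degree. Projectivity of $M$ lets us lift $\id_M$ along it, so $M$ is a retract of $\oplus_i\cP^i_{M_i}$ and therefore lies in $\mathcal{S}$. Dually, if $M$ is injective, the unit map $M\to\oplus_i\cP^{i-1}_{M_i}$ is an admissible mono, and injectivity gives a retraction. So $M$ is a summand of $\oplus_i\cP^{i-1}_{M_i}$. This object has the form $\oplus_j\cP^j_{Y_j}$ with $Y_j=M_{j+1}$, so $M\in\mathcal{S}$ as well. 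This proves both inclusions, and therefore that projectives, injectives and $\mathcal{S}$ all coincide.

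The only point needing care is the bookkeeping in the last step. One must check that the family of right adjoints $X\mapsto\cP^{i-1}_X$, as $i$ ranges over $\mathbb{Z}/n\mathbb{Z}$, is literally the same family as the left adjoints $\cP^i_X$, so that no new class of objects appears. One must also confirm that the unit and counit maps are admissible, meaning degreewise split; this is exactly what the explicit splittings in Proposition \ref{existenceprojinj} provide. With these two checks the proof is formal, and it also shows that $\MF^n_{ex}(\C,W)$ is Frobenius.
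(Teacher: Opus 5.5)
Your proof is correct and is essentially the paper's argument: a projective (resp.\ injective) $M$ is a retract of its canonical trivial cover $\oplus_i\cP^i_{M_i}$ (resp.\ hull $\oplus_i\cP^{i-1}_{M_i}$), and these are projective-injective by Proposition \ref{charprojPx}. Your version is slightly cleaner than the paper's one-line proof. You split the cover using projectivity of $M$, whereas the paper phrases the splitting through the injective hull of a projective, where as written it is really the cover that splits. You also make the description as summands of $\oplus_i\cP^{i}_{X_i}$ explicit rather than leaving it implicit.
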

\begin{proof}
    For any projective $Q$ one can take an injective hull $Q\to I(Q)$; since $I(Q)$ is projective, this splits and we find $I(Q)\cong Q\oplus X$. So $Q$ is a summand of an injective module, hence injective itself. The converse is dual.
\end{proof}
\begin{corollary}\label{nstepisfrobenius}
    The category $\MF_{ex}^n(\C, W)$ is a Frobenius category.
\end{corollary}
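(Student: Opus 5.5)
The corollary follows by checking the definition of a Frobenius category from Section \ref{frobeniusec} directly against the results just established. Recall that an exact category is Frobenius if it has enough projectives and enough injectives, and the two classes coincide. The plan has three steps.

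First, the graded-split structure $\MF^n_{ex}(\C,W)$ is an honest exact structure. This is the proposition obtained from Lemma \ref{pullbackex} applied to the forgetful functor $\Gr_n$. The only point worth recording is why $\Gr_n$ creates the relevant limits and colimits. Pushouts along graded-split monos, and pullbacks along graded-split epis, are computed degreewise in $\C$, where they exist because the maps are split. The induced differentials are then determined uniquely by the universal property. The relation that $n$ consecutive differentials compose to $W$ holds on the pushout because $W$ is a natural transformation of $\id_\C$, so it commutes with the structure maps.

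Second, Proposition \ref{existenceprojinj} supplies enough projectives and injectives. For each $M$, the morphism $\oplus_i\cP^i_{M_i}\to M$ is an admissible epi, and $M\to\oplus_i\cP^{i-1}_{M_i}$ is an admissible mono. Admissibility holds because both maps are degreewise split, as the displayed splittings show. The source of the first map is projective and the target of the second is injective, by Proposition \ref{charprojPx}.

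Third, the last proposition identifies the class of projectives with the class of injectives. Both classes are the summands of objects $\oplus_{i}\cP^i_{X_i}$; here idempotent completeness is used to make sense of summands. The argument runs as follows. Take a projective $Q$ and its admissible mono into an injective $I(Q)$ from the second step. Because $I(Q)$ is also projective, the cokernel sequence $Q\hookrightarrow I(Q)\twoheadrightarrow I(Q)/Q$ splits, so $Q$ is a summand of an injective and hence injective. The converse is dual.

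Combining the three steps gives exactly the Frobenius axioms, so the corollary is immediate. The one place that requires actual care is the first step, namely the verification that $\Gr_n$ creates pushouts and pullbacks along the graded-split maps, so that the structure maps land in $\MF^n(\C,W)$. Everything after that is formal.
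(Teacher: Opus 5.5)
Your first two steps are sound (the first is more explicit than the paper, which simply invokes Lemma \ref{pullbackex}), and your overall structure is the paper's. The gap is in the third step, at ``because $I(Q)$ is also projective, the sequence $Q\hookrightarrow I(Q)\twoheadrightarrow I(Q)/Q$ splits.'' Projectivity of the \emph{middle} term of an admissible short exact sequence gives no splitting. A sequence $A\hookrightarrow B\twoheadrightarrow C$ splits when $A$ is injective or $C$ is projective. Here the first condition is exactly what you are trying to prove, and the second is unknown.

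The inference genuinely fails in general. For representations of the quiver $1\to 2$, the simple projective embeds into its injective hull $k\xrightarrow{\id}k$, which is projective. The cokernel is the simple injective, and the sequence does not split, since the simple projective is not injective. The paper's proof of the preceding proposition is worded the same way, so you have inherited this slip rather than introduced it. Still, as written the step does not establish anything.

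The repair uses the \emph{other} canonical map from your second step. This is where the actual content of the Frobenius property lies: the canonical covers and hulls are built from objects $\cP^i_X$ that are projective and injective at once (Proposition \ref{charprojPx}).

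If $Q$ is projective, the admissible epi $\bigoplus_i\cP^i_{Q_i}\twoheadrightarrow Q$ splits because $\id_Q$ lifts along it. So $Q$ is a retract of $\bigoplus_i\cP^i_{Q_i}$. That object is injective, and retracts of injectives are injective.

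Dually, if $J$ is injective, the admissible mono $J\hookrightarrow\bigoplus_i\cP^{i-1}_{J_i}$ splits because $\id_J$ extends along it. So $J$ is a retract of a projective, hence projective.

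With this substitution your three steps give the corollary. They also give the description of both classes as summands of objects $\bigoplus_i\cP^i_{X_i}$.
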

\begin{remark}
    Taking as $\C$ the category of finitely generated free modules over a regular ring and $W$ as the action of a non-zero non-unit element $f$, one recovers \cite[Theorem 2.15]{tribone2021matrixfactorizationsfactors}; we'll come back to this in Section \ref{affine}.
\end{remark}

We will thus onward denote with $\Stab(\MF^n(\C,W))$ the stable category of the exact category $\MF_{ex}^n(\C,W)$.
\subsection{The exact case}
Assume now that the category $\C$ comes already equipped with an exact structure $\mathcal{E}$. We can thus apply again Lemma \ref{pullbackex} to obtain a different exact structure on the category of matrix factorizations, this time pulled back from $\mathcal{E}$ via the functor $\Gr_n$. Since the case that we will be interested in this exact structure will be abelian, we'll denote this second exact structure with $\MF^n_{ab}(\C, W)$; note that if $\C$ is abelian, the same holds for $\MF^n_{ab}(\C, W)$.
\begin{proposition}
    If $X$ is projective (resp. injective) in $\C$, then $\cP^i_X$ is projective (resp. injective) in $\MF^n_{ab}(\C, W)$. If $\C$ has enough projectives (resp. injectives), the exact category $\MF^n_{ab}(\C, W)$ has enough projectives (resp. injectives).
\end{proposition}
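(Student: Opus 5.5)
The plan is to follow the template already used in the graded-split case: the statement reduces to the adjunctions between $\C$ and $\MF^n(\C,W)$ given by $X\mapsto \cP^i_X$ and $X\mapsto\cP^{i-1}_X$, which are respectively left and right adjoint to the evaluation functor $\mathrm{ev}_i\colon M\mapsto M_i$. The key observation is that $\mathrm{ev}_i$ is exact for the structure $\MF^n_{ab}(\C,W)$. A sequence in $\MF^n_{ab}(\C,W)$ is admissible precisely when its image under $\Gr_n$ is admissible in $\Gr_n(\C)$ with the structure induced by $\mathcal{E}$, that is, when it is componentwise admissible in $\C$. Hence each $\mathrm{ev}_i$ sends admissible sequences to admissible sequences.

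For the first claim, let $X$ be projective in $\C$ and let $N\twoheadrightarrow L$ be an admissible epi in $\MF^n_{ab}(\C,W)$. By adjunction, a morphism $\cP^i_X\to L$ corresponds to a morphism $X\to L_i$. The component $N_i\to L_i$ is an admissible epi in $\C$, so this morphism lifts to $X\to N_i$. Transporting the lift back through the adjunction gives a lift $\cP^i_X\to N$. The injective case is dual: use the right adjoint $X\mapsto \cP^{i-1}_X$, together with the fact that the components of admissible monos are admissible monos. I would state this once as the general principle that a left (right) adjoint of an exact functor preserves projectives (injectives), as in Proposition \ref{charprojPx}.

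For enough projectives, take $M$ and, for each $i$, an admissible epi $Q_i\twoheadrightarrow M_i$ with $Q_i$ projective in $\C$. Let $\varepsilon_i\colon\cP^i_{Q_i}\to M$ be the morphism adjoint to this epi, and consider $\varepsilon=\sum_i\varepsilon_i\colon\bigoplus_i \cP^i_{Q_i}\to M$. The source is projective by the first part. It remains to show that $\varepsilon$ is an admissible epi in $\MF^n_{ab}(\C,W)$, i.e. that each component $\varepsilon_j\colon \bigoplus_i Q_i\to M_j$ is an admissible epi in $\C$ (pulling back exact structures via Lemma \ref{pullbackex}, admissibility is detected componentwise). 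In degree $j$, the summand $\cP^j_{Q_j}$ contributes exactly the chosen epi $Q_j\twoheadrightarrow M_j$, since the counit in its own degree is the identity followed by that epi. So the component $\bigoplus_i Q_i\to M_j$ is a map $(f,p)\colon Q'\oplus Q_j\to M_j$ with $p$ an admissible epi. This is where I expect the only real subtlety: we need that a map out of a direct sum which restricts to an admissible epi on a summand is itself an admissible epi. This holds in any exact category, because $(f,p)=p'\circ\theta$ with $\theta=\begin{pmatrix}1&0\\ \ast&1\end{pmatrix}$-type automorphism reducing to $(0,p)$. Indeed $(f,p)\circ\begin{pmatrix}1&0\\ -g&1\end{pmatrix}$ would need $f=pg$, which fails in general, so instead I would argue via the obscure axiom / the fact that $(0,p)$ being an admissible epi and $(f,p)$ having a right inverse-up-to-epi composite: $(f,p)\circ\iota_2=p$ is an admissible epi, hence $(f,p)$ is an admissible epi by Bühler's ``obscure axiom'' (\cite{Exactcats}, using idempotent completeness, which we assume). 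Then $\varepsilon$ is an admissible epi, as needed.

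Dually, for enough injectives, choose admissible monos $M_i\hookrightarrow I_i$ and map $M\to\bigoplus_i\cP^{i-1}_{I_i}$. The degree-$j$ component contains the summand $M_j\hookrightarrow I_j$, so it is an admissible mono by the dual obscure axiom. When $\C$ is abelian, as in our intended application, this step is simply the elementary fact that the sum of maps is surjective (respectively, the product is injective) once one constituent is.
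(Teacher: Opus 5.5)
Your proposal is correct and follows the paper's argument: the first claim comes from $\cP^i\dashv \mathrm{ev}_i\dashv\cP^{i-1}$ with $\mathrm{ev}_i$ exact, and the second from the composite $\bigoplus_i\cP^i_{Q_i}\to\bigoplus_i\cP^i_{M_i}\to M$ (and its dual), where you usefully spell out the componentwise check that this map is an admissible epi, which the paper leaves implicit. One small correction: the shear trick you discarded does work here, because $\bigoplus_{i\neq j}Q_i$ is projective, so $f$ lifts through $p$ and gives $(f,p)=(0,p)\circ\theta$; this avoids any appeal to weak idempotent completeness or to the obscure axiom, and the latter as literally stated would also require $(f,p)$ to have a kernel.
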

\begin{proof}
    The first statement follows again by the fact that the functors $\cP^i$ are both left and right adjoint to an exact functor. For the second, let $M\in \MF^n(\C, W)$, take for every $i\in \mathbb{Z}/n\mathbb{Z}$ a projective cover\footnote{By this we simply mean that $P_i$ is a projective object surjecting onto $M_i$} $P_i \to M_i$ (resp. injective hull $M_i \to I_i$.) and consider the compositions \[
\oplus_i\cP_{P_i}^i\to \oplus_i \cP_{M_i}^i \to M \text{ and } M  \to \oplus_i \cP_{M_i}^{i-1}  \to \oplus_i \cP_{I_i}^{i-1}.
\]
\end{proof}
\begin{proposition}\label{projectiveabelian}
    A matrix factorization is projective (resp. injective) in $\MF^n_{ab}(\C, W)$ if and only if it is projective-injective in $\MF_{ex}(\C, W)$ and its graded components are projective  (resp. injective) in $\C$.
\end{proposition}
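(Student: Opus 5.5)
We prove the projective case; the injective case is dual, using the right adjoints $\cP^{i-1}$ and the maps $M\to \oplus_i \cP^{i-1}_{I_i}$ in place of the left adjoints and $\oplus_i\cP^i_{P_i}\to M$.

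\emph{Forward direction.} Let $Q$ be projective in $\MF^n_{ab}(\C,W)$. By the previous proposition, $\MF^n_{ab}(\C,W)$ has enough projectives. So there is an admissible epi
\[
\oplus_i \cP^i_{P_i}\twoheadrightarrow Q
\]
with each $P_i$ projective in $\C$. Projectivity of $Q$ makes this epi split, so $Q$ is a direct summand of $\oplus_i\cP^i_{P_i}$. This has two consequences.
\begin{itemize}
\item The graded components of $\oplus_i \cP^i_{P_i}$ are finite sums of the $P_i$, hence projective in $\C$. Since $\C$ is idempotent complete, the components $Q_j$, being summands of these, are projective too.
\item $\oplus_i\cP^i_{P_i}$ is projective-injective in $\MF^n_{ex}(\C,W)$ by Proposition \ref{charprojPx}. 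Summands of projective-injectives are projective-injective, so $Q$ is projective-injective in $\MF^n_{ex}$.
\end{itemize}

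\emph{Converse.} Let $Q$ be projective-injective in $\MF^n_{ex}$ with each $Q_j$ projective in $\C$. By the proposition characterising projective-injectives of $\MF^n_{ex}$, $Q$ is a summand of some $\oplus_i \cP^i_{X_i}$, via a split inclusion $s\colon Q\to \oplus_i\cP^i_{X_i}$ and retraction $r$ with $rs=\id_Q$. The obstacle is that the $X_i$ need not be projective in $\C$, so $\oplus_i \cP^i_{X_i}$ need not be projective in $\MF^n_{ab}$.

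To get around this, I will replace the $X_i$ by the components of $Q$ itself. Consider the map
\[
\pi\colon \oplus_i \cP^i_{Q_i}\to Q
\]
from the proof of Proposition \ref{existenceprojinj}. It is graded-split, so it is an admissible epi in $\MF^n_{ex}$. Since $Q$ is projective in $\MF^n_{ex}$, $\pi$ has a section that commutes with the differentials. Hence $Q$ is a summand of $\oplus_i\cP^i_{Q_i}$ in $\MF^n(\C,W)$.

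Each $Q_i$ is projective in $\C$, so each $\cP^i_{Q_i}$ is projective in $\MF^n_{ab}$ by the previous proposition. Finite sums and summands of projectives are projective, so $Q$ is projective in $\MF^n_{ab}$.

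The key step is this use of $\MF^n_{ex}$-projectivity to split the canonical epi $\pi$ built from $Q$'s own components. Once that is in place, the rest is formal.
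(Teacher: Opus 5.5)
Your converse is exactly the paper's argument. You split the canonical graded-split epi $\oplus_i \cP^i_{Q_i}\to Q$ using projectivity in $\MF^n_{ex}(\C,W)$, and observe that $\oplus_i\cP^i_{Q_i}$ is projective in $\MF^n_{ab}(\C,W)$ because the $Q_i$ are projective in $\C$.

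The forward direction, however, relies on a hypothesis the statement does not make. You get the admissible epi $\oplus_i\cP^i_{P_i}\twoheadrightarrow Q$ from the preceding proposition. That proposition only gives enough projectives in $\MF^n_{ab}(\C,W)$ \emph{when $\C$ has enough projectives}, whereas here $\C$ is only assumed to carry an exact structure $\mathcal{E}$. The injective half is worse. Your dual argument needs injective hulls $M_i\to I_i$ in $\C$, but e.g.\ $\Coh(\mathbb{A}^1)$ has no nonzero injective objects at all, and $\Coh(X)$ is the case of interest later. As written, you have proved the proposition only under the extra assumption that $\C$ has enough projectives (resp.\ injectives).

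No cover is needed, and this is how the paper argues. A graded-split sequence is degreewise split, hence degreewise admissible, since split sequences are admissible in any exact structure. So every admissible epi of $\MF^n_{ex}(\C,W)$ is an admissible epi of $\MF^n_{ab}(\C,W)$. An $\MF^n_{ab}$-projective therefore has the lifting property against this smaller class, i.e.\ it is projective in $\MF^n_{ex}(\C,W)$, hence projective-injective by Corollary \ref{nstepisfrobenius}.

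For the components, $M\mapsto M_i$ is left adjoint to $X\mapsto\cP^{i-1}_X$. That functor is exact into $\MF^n_{ab}(\C,W)$, because every component of $\cP^{i-1}_X$ is $X$. A left adjoint of an exact functor preserves projectives, so each $Q_i$ is projective in $\C$. Dually, $M\mapsto M_i$ is right adjoint to the exact functor $X\mapsto\cP^i_X$, so it preserves injectives. Together with the observation that $\MF^n_{ab}$-injectives are $\MF^n_{ex}$-injectives, this settles the injective case with no hypothesis on $\C$.
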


\begin{proof}
    We prove the projective case, the injective being dual. If $P$ is projective in $\MF_{ab}(\C, W)$, then it is all the more projective in $\MF_{ex}(\C, W)$. Since taking graded pieces is left adjoint to an exact functor, the graded components of $P$ must also be projective. For the converse, let $P$ be a matrix factorization that is projective in $\MF_{ex}(\C, W)$ and has projective components. Then by Proposition \ref{charprojPx} its canonical projective cover $\oplus_i \cP_{P_i}^i$ is projective in $\MF_{ab}^n(\C, W)$, and since $P$ is projective in $\MF_{ex}(\C, W)$, the map $\oplus_i \cP_{P_i}^i\to P$ must split; so $P$ is a summand of a projective object, hence projective itself.
\end{proof}

\section{The absolute derived category of matrix factorizations}\label{secabs}

Assume now that $\C$ is an abelian category. There are then two exact structures on $\MF^n(\C, W)$: the graded-split one $\MF^n_{ex}(\C,W)$, that is Frobenius, and the abelian one $\MF^n_{ab}(\C, W)$. We can thus consider the two derived categories. Explicitly, the complexes which are quotiented out to obtain $D^b(\MF^n_{ex}(\C, W))$ are those which admit a contracting homotopy that may, in general, not be a map of matrix factorizations but only of graded objects. This is a strict subclass of the larger class of acyclic (in the usual, abelian, sense) complexes. There is thus a localization functor \[
D^b(\MF^n_{ex}(\C, W))\to D^b(\MF^n_{ab}(\C, W))
\] whose kernel is the subcategory $\mathcal{K} \subseteq D^b(\MF^n_{ex}(\C, W))$ given by the acyclic  complexes. Since $\MF^n_{ex}(\C, W)$ is Frobenius, by Proposition \ref{totfrobenius} there is an equivalence \[
\frac{D^b(\MF^n_{ex}(\C, W))}{K^b(\operatorname{ProjInj)}}\tow{\sim} \Stab(\MF^n(\C, W)),
\] induced by a localization \begin{equation}\label{Toteq}
    \operatorname{Tot}\colon D^b(\MF^n_{ex}(\C, W)) \to \Stab(\MF^n(\C, W)).
\end{equation}
\begin{definition}
   We define the absolute derived category $D^{\operatorname{abs}}\MF^n(\C, W)$ of $n$-matrix factorizations as the quotient \[
D^{\operatorname{abs}}\MF^n(\C, W):=\frac{\Stab(\MF^n(\C, W))}{\operatorname{Tot(\mathcal{K})}}.
\] 
\end{definition}
By construction, we also have a commutative diagram

\[\begin{tikzcd}
	{D^b(\MF^n_{ex}(\C, W)} & {D^b(\MF^n_{ex}(\C, W)} \\
	{\Stab(\MF^n(\C, W))} & {D^{\operatorname{abs}}(\MF^n(\C, W))}
	\arrow["{q_1}", from=1-1, to=1-2]
	\arrow["\Tot"', from=1-1, to=2-1]
	\arrow["\Tot", from=1-2, to=2-2]
	\arrow["{q_2}", from=2-1, to=2-2]
\end{tikzcd}\] where the right arrow is induced by \eqref{Toteq}. Reasoning exactly as in \cite[Proposition A.3]{Efimovhfp} one sees that the right arrow is a localization with kernel given by $q_1(\Ker \Tot)=q_1(K^b(\operatorname{ProjInj}))$, which we will denote (by a slight abuse of notation) with \[
K^b(\operatorname{ProjInj})\subseteq D^b(\MF^n_{ab}(\C, W)).
\] We have thus obtained:

\begin{corollary}
    The functor $\Tot$ induces an equivalence \[
D^{\operatorname{abs}}\MF^n(\C, W)\cong \frac{D^b(\MF^n_{ab}(\C, W))}{K^b(\operatorname{ProjInj)}}
\]
\end{corollary}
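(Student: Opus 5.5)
The plan is to prove the equivalence by composing two Verdier quotients and identifying the two kernels. Recall the commutative square above. We have
\[
D^{\operatorname{abs}}\MF^n(\C,W)=\Stab(\MF^n(\C,W))/\Tot(\mathcal{K}),
\]
and by Proposition \ref{totfrobenius} we also have
\[
\Stab(\MF^n(\C,W))\cong D^b(\MF^n_{ex}(\C,W))/K^b(\operatorname{ProjInj}).
\]
Hence $D^{\operatorname{abs}}\MF^n(\C,W)$ is the quotient of $D^b(\MF^n_{ex}(\C,W))$ by the thick subcategory $\mathcal{S}:=\operatorname{span}\{K^b(\operatorname{ProjInj}),\mathcal{K}\}$. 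The justification is the standard fact that iterated Verdier quotients are quotients by the preimage: the preimage of $\Tot(\mathcal{K})$ under the localization $\Tot$ is the thick closure of $\mathcal{K}$ together with $\Ker\Tot$. The thick closure is needed because the essential image of a thick subcategory under a quotient need not be closed under isomorphism in the target. I would also take $\Tot(\mathcal{K})$ to mean its thick closure, as the definition implicitly requires.

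On the other side, the localization $q_1\colon D^b(\MF^n_{ex}(\C,W))\to D^b(\MF^n_{ab}(\C,W))$ has kernel exactly $\mathcal{K}$, as stated before the definition. So $D^b(\MF^n_{ab})/q_1(K^b(\operatorname{ProjInj}))$ is again the quotient of $D^b(\MF^n_{ex})$ by the thick closure of $\mathcal{K}$ and $K^b(\operatorname{ProjInj})$, i.e. by the same $\mathcal{S}$. Both sides therefore satisfy the same universal property: a triangulated functor out of $D^b(\MF^n_{ex}(\C,W))$ that kills $\mathcal{S}$ factors uniquely through each. This gives an equivalence. It is induced by $\Tot$, because it is the unique factorization of $q_2\circ\Tot = \Tot\circ q_1$, which is the right vertical arrow in the square.

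The main obstacle is to make sure $q_1$ is genuinely a Verdier localization with kernel $\mathcal{K}$, rather than just a functor killing $\mathcal{K}$. This amounts to checking that $D^b(\MF^n_{ab})\simeq D^b(\MF^n_{ex})/\mathcal{K}$. Both are localizations of $K^b(\MF^n(\C,W))$: the first at complexes acyclic for the abelian structure, the second at graded-split-acyclic complexes. Every graded-split-acyclic complex is abelian-acyclic, and $\mathcal{K}$ is by definition the image of the abelian-acyclic ones. Iterated localization then identifies the two. The one point to verify is that the abelian-acyclic complexes form a thick subcategory of $K^b$ containing the split-acyclic ones, which is standard for an abelian category.

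Once this holds, the argument is the formal manipulation of quotients in the spirit of \cite[Proposition A.3]{Efimovhfp} already invoked above. The only care needed is in tracking thick closures, so that the notation $K^b(\operatorname{ProjInj})\subseteq D^b(\MF^n_{ab}(\C,W))$ is read as the thick subcategory generated by $q_1(K^b(\operatorname{ProjInj}))$.
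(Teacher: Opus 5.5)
Your proposal is correct and follows the same route as the paper. Both arguments write the composite $D^b(\MF^n_{ex}(\C,W))\to D^{\operatorname{abs}}\MF^n(\C,W)$ as the Verdier quotient by the thick subcategory generated by $\mathcal{K}$ and $K^b(\operatorname{ProjInj})$, then factor it through $q_1$; the paper simply cites \cite[Proposition A.3]{Efimovhfp} for this formal step. Your treatment of thick closures, and your check that $q_1$ is a genuine localization with kernel $\mathcal{K}$, supply details the paper leaves implicit.
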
 For future use, it's worth remarking that the subcategory $K^b(\operatorname{ProjInj})$ is generated inside $D^b(\MF^n_{ab}(\C, W))$ by the trivial matrix factorizations (not necessarily with projective entries) concentrated in degree 0.

\section{The affine case}\label{affine}
One of the classical facts about matrix factorizations (of length two) is that for smooth affine schemes, to obtain the category with the correct geometric interpretation it is not necessary to use the quotient construction of the absolute derived category; a more explicit model is simply given by the homotopy category of matrix factorization with entries that are projective (that is, vector spaces) see \cite{Efimov_2015, orlov2004triangulated, pomlin}. In this section, we show that the same holds for the $n$-fold case. This explicit model for the category of $n$-matrix factorizations with projective entries was introduced in \cite{tribone2021matrixfactorizationsfactors}. Indeed for a regular local ring $R$, taking as $\C$ the exact category of finitely generated free $R$-modules and $W$ equal to the action of a non-unit non-zerodivisor, Tribone already showed in \cite{tribone2021matrixfactorizationsfactors} that the category $\MF^n(\C, W)$ admits a Frobenius exact structure. We place ourselves in a more general setting, since our proofs still work in the non-local (and non-commutative) case.
\begin{remark}
    Since every short exact sequence of projective objects splits, the exact structure considered in \cite{tribone2021matrixfactorizationsfactors} is precisely the one pulled back from the split exact structure on $\operatorname{Proj}_\C$.
\end{remark}
Let now $\C$ be an abelian category with enough projectives and finite projective dimension. Let \[\MF^n_{\operatorname{proj}}(\C, W):=\MF^n_{ex}(\operatorname{Proj}_\C, W)\] be the Frobenius exact category of degreewise projective matrix factorizations. The inclusion $\operatorname{Proj}_\C \hookrightarrow \C$ induces a fully faithful exact functor \[
\MF^n_{\operatorname{proj}}(\C, W) \hookrightarrow \MF^n_{ex}(\C, W)
\] 

\begin{lemma} The induced functor
\[
\Stab(\MF^n_{\operatorname{proj}}(\C, W)) \hookrightarrow \Stab(\MF^n(\C, W)).
\]    is fully faithful.
\end{lemma}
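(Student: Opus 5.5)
Since the functor $\MF^n_{\operatorname{proj}}(\C, W) \hookrightarrow \MF^n_{ex}(\C, W)$ is already fully faithful on the underlying additive categories, fullness of the induced functor on stable categories is automatic. The plan is therefore to prove faithfulness. Concretely, we must show the following. Let $f\colon M\to N$ be a morphism between degreewise projective factorizations that factors through a projective-injective object of $\MF^n_{ex}(\C,W)$. Then $f$ already factors through a projective-injective object of $\MF^n_{\operatorname{proj}}(\C,W)$.

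The key tool is the description of projective-injectives in both categories. By Proposition \ref{existenceprojinj} and the subsequent characterization, the projective-injectives of $\MF^n_{ex}(\C,W)$ are summands of objects $\oplus_i \cP^i_{X_i}$ with $X_i\in\C$. Those of $\MF^n_{\operatorname{proj}}(\C,W)$ are summands of $\oplus_i\cP^i_{P_i}$ with $P_i\in\operatorname{Proj}_\C$.

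First step: reduce to factorizations through the canonical injective hull. Suppose $f$ factors as $M\to I\to N$ with $I$ projective-injective in $\MF^n_{ex}(\C,W)$. Consider the canonical admissible mono $\iota\colon M\to \oplus_i\cP^{i-1}_{M_i}$ from the proof of Proposition \ref{existenceprojinj}; it is graded-split. Since $I$ is injective in $\MF^n_{ex}(\C,W)$, the map $M\to I$ extends along $\iota$. Hence $f$ factors as $M\to \oplus_i\cP^{i-1}_{M_i}\to I\to N$, i.e. through $\oplus_i\cP^{i-1}_{M_i}$. Because $M$ has projective components $M_i$, this object lies in $\MF^n_{\operatorname{proj}}(\C,W)$. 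By Proposition \ref{charprojPx}, applied with $\C$ replaced by $\operatorname{Proj}_\C$ with its split exact structure, it is projective-injective there.

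Second step: conclude. The composite $\oplus_i\cP^{i-1}_{M_i}\to N$ is a morphism between objects of the full subcategory $\MF^n_{\operatorname{proj}}(\C,W)$. Therefore $f$ factors through a projective-injective of $\MF^n_{\operatorname{proj}}(\C,W)$ and vanishes in $\Stab(\MF^n_{\operatorname{proj}}(\C, W))$. This gives faithfulness.

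The only point needing care is checking that $\iota$ is an admissible mono in $\MF^n_{ex}(\C,W)$, so that the injectivity of $I$ can be used. This holds because the splittings exhibited in the proof of Proposition \ref{existenceprojinj} are graded splittings, which is exactly the admissibility condition. I expect no real obstacle beyond this bookkeeping. In particular, the hypotheses of enough projectives and finite projective dimension are not needed for this lemma; they should enter only when proving essential surjectivity later.
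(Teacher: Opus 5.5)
Your proof is correct. It is essentially the paper's argument in dual form: the paper factors $f$ through the canonical projective cover $\oplus_i \cP^i_{N_i}\to N$ of the target, lifting $Q\to N$ by projectivity of $Q$, whereas you factor through the canonical injective hull $M\to\oplus_i\cP^{i-1}_{M_i}$ of the source, extending $M\to I$ by injectivity of $I$; both rest on the same observation that these canonical objects have projective entries and are projective-injective in $\MF^n_{\operatorname{proj}}(\C,W)$.
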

\begin{proof}
    This follows from the fact that if $N\in \MF^n(\C, W)$ has projective entries, then its projective cover $P_N\to N$ also has projective entries. Hence if $M,N\in \MF^n_{\operatorname{proj}}(\C, W)$ and $f\colon M\to N$ is a morphism that factors through a projective $Q\in \MF^n_{ex}(\C, W)$, then it must also factor through the projective object $P_N$. 
\end{proof}
\begin{lemma}\label{sodabelian}
    There is a semiorthogonal decomposition \begin{equation}\label{sod1}
            K^-(\MF_{ex}^n(\C,W))=\langle K^-(\operatorname{Proj}_{\MF_{ab}^n(\C,W)}), \mathcal{K}^-\rangle,
        \end{equation}
 where we have denoted with $\mathcal{K}^-\subseteq K^-(\MF_{ex}^n(\C,W))$ the subcategory given by the acyclic (in the abelian sense) complexes.
\end{lemma}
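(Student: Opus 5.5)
The plan is to prove the two conditions of a semiorthogonal decomposition separately: a vanishing of morphisms between the two pieces, and a generation statement. Here $K^-(\MF_{ex}^n(\C,W))$ is just the homotopy category of bounded-above complexes in the additive category $\MF^n(\C,W)$; the exact structure only matters for deciding which complexes are acyclic. Both $K^-(\operatorname{Proj}_{\MF_{ab}^n(\C,W)})$ and $\mathcal{K}^-$ are triangulated subcategories, since mapping cones of complexes of projectives are again such, and abelian acyclicity is preserved under cones.

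\emph{Step 1 (orthogonality).} Let $P$ be a bounded-above complex of projectives of the abelian category $\MF^n_{ab}(\C,W)$, and let $A\in\mathcal{K}^-$. I would show that every chain map $f\colon P\to A$ is null-homotopic, by the classical inductive construction. Since $P$ is bounded above, one constructs the homotopy $h^k\colon P^k\to A^{k-1}$ by descending induction on $k$. At each step the map $f^k-d h^{k+1}$ lands in the kernel of $d\colon A^k\to A^{k+1}$. That kernel is the image of $A^{k-1}$ because $A$ is acyclic, and projectivity of $P^k$ in $\MF^n_{ab}$ lets us lift along the admissible epi $A^{k-1}\twoheadrightarrow Z^k$. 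All maps produced are maps of matrix factorizations, so the resulting homotopy lives in $K^-(\MF_{ex}^n)$. This gives $\Hom(P,A)=0$, which is the semiorthogonality required in \eqref{sod1}. Explicitly, the projective piece is the one whose morphisms \emph{into} acyclics vanish, i.e.\ the acyclic complexes form the right orthogonal of the projective piece.

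\emph{Step 2 (generation).} Given $X\in K^-(\MF_{ex}^n(\C,W))$, I would produce a bounded-above complex of projectives $P$ and a quasi-isomorphism (in the abelian sense) $P\to X$. Here I use that $\MF^n_{ab}(\C,W)$ is abelian with enough projectives, by the proposition on the objects $\cP^i_X$ applied to $\C$, which has enough projectives. The construction is the standard one for bounded-above complexes: work downward from the top degree. At each step, choose a projective surjecting onto the appropriate fibre product of the cycles of $X$ with the previously constructed piece, using the canonical covers $\oplus_i \cP^i_{P_i}\to M$. Then the mapping cone $C$ of $P\to X$ is a bounded-above complex which is acyclic in the abelian sense, so $C\in\mathcal{K}^-$. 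The triangle $P\to X\to C\to P[1]$ exhibits $X$ as an extension of an object of $\mathcal{K}^-$ by an object of $K^-(\operatorname{Proj}_{\MF^n_{ab}})$. Together with Step 1, this gives the decomposition and identifies the projection functors with resolution and cone.

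The only point I expect to need care is Step 2. The complexes are bounded above but not bounded below, so the inductive construction must be run degree by degree downward without termination. This is harmless because we only need $P$ to be bounded above, not bounded. Finite projective dimension of $\C$ is not needed here; it will only matter later when comparing with bounded categories.
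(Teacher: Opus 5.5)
Your proof is correct and follows the same route as the paper: the paper just cites the standard decomposition $K^-(\mathrm{A})=\langle K^-(\operatorname{Proj}_{\mathrm{A}}),\mathcal{K}^-\rangle$ for the abelian category $\mathrm{A}=\MF^n_{ab}(\C,W)$, together with $K^-(\MF^n_{ex}(\C,W))=K^-(\MF^n_{ab}(\C,W))$, and your Steps 1 and 2 are the proof of that fact: an inductive null-homotopy (the term to lift is $f^k-h^{k+1}d$, not $f^k-dh^{k+1}$) and bounded-above projective resolutions. One caveat on ordering: under the convention fixed in the paper's preliminaries ($\Hom(\mathcal{A}_j,\mathcal{A}_i)=0$ for $i<j$), the vanishing $\Hom(P,A)=0$ you prove corresponds to $\langle \mathcal{K}^-, K^-(\operatorname{Proj}_{\MF^n_{ab}(\C,W)})\rangle$, so it is not literally ``the semiorthogonality required in \eqref{sod1}''; since the reverse vanishing $\Hom(\mathcal{K}^-,K^-(\operatorname{Proj}))=0$ fails in general, the misordering is in the paper's statement, and your argument proves the correct version.
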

\begin{proof}
   This follows from the standard fact that for any abelian category $\mathrm{A}$ with enough projectives, there is a SOD \[
   K^-(\mathrm{A})=\langle K^-(\operatorname{Proj}_{\mathrm{A}}), \mathcal{K}^-\rangle 
   \]owing to the fact that any right-bounded complex admits a resolution by a right-bounded complex of projectives, together with the fact that since the definition of the homotopy category does not involve the exact structure, $K^-(\MF_{ex}^n(\C,W))=K^-(\MF_{ab}^n(\C,W))$. 
\end{proof}
We will repeatedly make use of the following lemma:
\begin{lemma}\label{sodquotient}
    Let $\T$ be a triangulated category admitting a semiorthogonal decomposition \[
    \T=\langle \T_1 ,\T_2 \rangle
    \] and let $\cS \subseteq \T$ be a triangulated subcategory compatible with the SOD, in the sense that if \[S_2\to S\to S_1\to S_2[1]\] is a triangle with $S_i\in \T_i$ and $S\in \cS$, then $S_1,S_2\in \cS$. Then there is an induced semiorthogonal decomposition \[
    \T/\cS=\langle Q(\T_1), Q(\T_2)\rangle,
    \] where we have denoted with $Q$ the quotient functor $\T\to \T/\cS$.
\end{lemma}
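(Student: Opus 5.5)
The plan is to verify the two defining conditions of a semiorthogonal decomposition for the images $Q(\T_1), Q(\T_2)$ in $\T/\cS$: they generate, and $\Hom_{\T/\cS}(Q(T_2), Q(T_1))=0$ for $T_i \in \T_i$. Generation is immediate. Every object $T\in\T$ sits in a triangle $T_2\to T\to T_1\to T_2[1]$ with $T_i\in\T_i$, and $Q$ is exact and essentially surjective, so every object of $\T/\cS$ lies in an extension of $Q(T_1)$ by $Q(T_2)$. I will also note that $Q(\T_i)$ is a triangulated subcategory. One can see this either by taking the essential image closed under isomorphism, or, more cleanly, by proving that $Q|_{\T_i}$ is fully faithful (which comes out of the argument below), so that $Q(\T_i)\simeq \T_i/(\cS\cap \T_i)$.

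The main step is semi-orthogonality. Take a morphism $Q(T_2)\to Q(T_1)$, represented by a roof $T_2 \xleftarrow{s} T' \xrightarrow{f} T_1$ with $\Cone(s)\in\cS$. The idea is to replace $T'$ by its $\T_2$-component. Decompose $T'$ via the triangle $T'_2\to T'\to T'_1\to T_2'[1]$. Because $\Hom(\T_2,\T_1)=0$, the composite $T'_2\to T'\to T_1$ vanishes. So it suffices to show that $T'_2\to T'\to T_2$ is still a morphism with cone in $\cS$; then the roof is equivalent to $T_2\leftarrow T'_2\to T_1$ with second arrow zero, hence it is zero.

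To see this, I will apply the compatibility hypothesis to $S:=\Cone(s)\in\cS$ and compare its $\T_1$- and $\T_2$-components. The triangle $T'\to T_2\to S$ together with functoriality of the SOD projections gives the following. Since $T_2\in\T_2$, the $\T_1$-projection of $S$ is $T_1'[1]$, and the $\T_2$-projection of $S$ is $\Cone(T_2'\to T_2)$. Concretely, the projection functors are exact, so applying them to the triangle gives exact triangles; the $\T_1$-projection of $T_2$ is $0$, and this yields the identification. Compatibility then tells us that both $T'_1[1]$ and $\Cone(T'_2\to T_2)$ lie in $\cS$. The latter is exactly what is needed. As a byproduct, $T'_1\in\cS$.

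I expect the main obstacle to be justifying this use of the projection functors. The compatibility hypothesis is stated only for the canonical triangle of an object of $\cS$, so I must check that the decomposition triangle of $S$ really is obtained from those of $T'$ and $T_2$ via the octahedral axiom and functoriality. This relies on the uniqueness of decomposition triangles up to unique isomorphism, which follows from $\Hom(\T_2,\T_1)=0$. The same argument, with roofs between objects of $\T_i$, shows that $Q|_{\T_i}$ is full and faithful onto $\T_i/(\cS\cap\T_i)$. That finishes the proof.
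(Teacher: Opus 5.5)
Your proposal is correct and follows essentially the same route as the paper. Both arguments decompose the middle object $T'$ of the roof, use $\Hom(\T_2,\T_1)=0$ to kill $T'_2\to T_1$, and apply an exact SOD projection to the triangle $T'\to T_2\to S$ together with the compatibility hypothesis. The only difference is cosmetic: the paper uses the $\T_1$-projection to get $T'_1\in\cS$, so that $f$ factors through an object killed by $Q$, while you use the $\T_2$-projection to get $\Cone(T'_2\to T_2)\in\cS$ and shrink the roof, and these two routes are equivalent by the octahedral axiom. Your extra remarks on generation and on $Q(\T_i)$ being triangulated fill in points the paper leaves implicit.
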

\begin{proof}
    The nontrivial thing to show is the orthogonality in the quotient, since the existence of the necessary triangle in $\T$ descends to the quotient. Let $X_i\in \T_i$, and consider a roof  \[
    T_2\overset{s}{\leftarrow} X \tow{f} T_1,
    \] in $\T$ representing a morphism $T_2\tow{\varphi} T_1$ in the quotient $\T/\cS$, where $\operatorname{Cone}(s)\in \cS$. We have a triangle \[
    X_2\to X \to X_1 \to X_2[1]
    \] with $X_i\in \T_i$; the composition $X_2\to X \tow{f} T_1$ vanishes because of the orthogonality, so $f$ factors as 
\[\begin{tikzcd}
	{T_2} & X & {T_1} \\
	& {X_1}
	\arrow["s"', from=1-2, to=1-1]
	\arrow["f", from=1-2, to=1-3]
	\arrow[from=1-2, to=2-2]
	\arrow["{\overline{f}}"', from=2-2, to=1-3]
\end{tikzcd}\]
We also have a triangle \[
X \tow{s} T_2 \to S \to X[1]
\] with  $S\in \cS$. Applying the functor of projection to $\T_1$, we obtain a triangle \[
X_1\to 0\to S_1 \to X_1[1];
\] since by hypothesis $S_1\in \cS$, we conclude that $X_1\in \cS$ and is thus killed by the quotient. Hence the morphism $\varphi$ factors through the zero object, and we are done.
\end{proof}

The conclusion in particular follows if $\cS$ is a subcategory of one of the two factors. Hence from Lemma \ref{sodabelian} we immediately get
\begin{corollary}
    The decomposition \eqref{sod1} descends to a semiorthogonal decomposition \begin{equation}\label{sod2}
    D^-(\MF_{ex}^n(\C,W))=\langle K^-(\operatorname{Proj}(\MF_{ab}^n(\C,W)), \mathcal{K}^-\rangle,
    \end{equation}
    where by abuse of notation we have again denoted with $\mathcal{K}^-$ the subcategory of $D^-(\MF_{ex}^n(\C,W))$ given by the acyclic complexes.
\end{corollary}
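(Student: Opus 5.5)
The plan is to obtain the statement as a direct application of Lemma \ref{sodquotient}. Take $\T=K^-(\MF^n_{ex}(\C,W))$ with the decomposition \eqref{sod1} of Lemma \ref{sodabelian}, where
\[
\T_1=K^-(\operatorname{Proj}_{\MF^n_{ab}(\C,W)}), \qquad \T_2=\mathcal{K}^-.
\]
Take $\cS\subseteq \T$ to be the subcategory of complexes that are acyclic for the graded-split exact structure. By definition $D^-(\MF^n_{ex}(\C,W))=\T/\cS$, and standard facts about derived categories of exact categories make $\cS$ a (thick) triangulated subcategory.

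The key step is to show that $\cS\subseteq \T_2=\mathcal{K}^-$. A complex in $\cS$ splits as in \eqref{splitting} into short exact sequences that are admissible in $\MF^n_{ex}(\C,W)$, i.e. degreewise split. Every degreewise split short exact sequence of matrix factorizations is in particular exact in the abelian structure $\MF^n_{ab}(\C,W)$, because the abelian structure is pulled back along $\Gr_n$ from $\C$ and split sequences are exact there. Hence the same splitting exhibits the complex as acyclic in the abelian sense.

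Next I will check the compatibility hypothesis of Lemma \ref{sodquotient}, as in the remark following it. Let $S\in\cS$. Its decomposition triangle with respect to $\langle\T_1,\T_2\rangle$ can be taken to be $S\to S\to 0\to S[1]$, since $S\in\T_2$ and $0\in\T_1$. By uniqueness of decomposition triangles in an SOD, this gives $S_2\cong S$ and $S_1=0$, and both lie in $\cS$. Lemma \ref{sodquotient} then yields
\[
D^-(\MF^n_{ex}(\C,W))=\langle Q(\T_1),Q(\T_2)\rangle .
\]

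Finally I will identify the two pieces, to justify the abuse of notation. Since $\cS\subseteq\mathcal{K}^-$, the image $Q(\mathcal{K}^-)$ is exactly the subcategory of $D^-(\MF^n_{ex}(\C,W))$ of abelian-acyclic complexes. For the other piece, $\T_1={}^\perp\T_2\subseteq{}^\perp\cS$, and therefore $Q$ restricted to $\T_1$ is fully faithful, so $Q(\T_1)\cong K^-(\operatorname{Proj}_{\MF^n_{ab}(\C,W)})$. There is no real obstacle here; the only point needing care is the inclusion $\cS\subseteq\mathcal{K}^-$, which is immediate once one notes that graded-split admissible sequences are abelian-exact.
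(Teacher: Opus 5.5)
Your proposal is correct and follows the paper's argument. The paper also applies Lemma \ref{sodquotient} to \eqref{sod1}, noting that the compatibility hypothesis holds automatically because the graded-split acyclic complexes form a subcategory of the factor $\mathcal{K}^-$; your extra checks (graded-split acyclic implies abelian acyclic, and $Q$ is fully faithful on $K^-(\operatorname{Proj}_{\MF^n_{ab}(\C,W)})$) spell out what the paper leaves implicit or states in the remark right after the corollary.
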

Note that the composition \[
K^-(\operatorname{Proj}_{\MF_{ab}^n(\C,W)})\hookrightarrow K^-({\MF_{ab}^n(\C,W)}) \to D^-({\MF_{ab}^n(\C,W)})
\] is fully faithful by virtue of Lemma \ref{sodabelian}, identifying thus $K^-(\operatorname{Proj}_{\MF_{ab}^n(\C,W)})$ with a subcategory of $D^-({\MF_{ab}^n(\C,W)})$.

Up until this point, we have not made use of the fact that $\C$ has finite projective dimension; that will however soon be needed in order to prove that the decomposition descends to the subcategory of bounded complexes.
\begin{proposition}
    The decomposition \eqref{sod2} restricts to a semiorthogonal decomposition \[
    D^b(\MF^n_{ex}(\C, W))=\langle K^{-}(\operatorname{Proj}_{\MF_{ab}^n(\C,W)}) \cap D^b(\MF^n_{ex}(\C, W)), \mathcal{K}\rangle.
    \]
\end{proposition}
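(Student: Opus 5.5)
The plan is to restrict the decomposition \eqref{sod2} of $D^-(\MF^n_{ex}(\C,W))$ to the full subcategory $D^b(\MF^n_{ex}(\C,W))$. Semiorthogonality is inherited automatically. What must be shown is that for every $M\in D^b(\MF^n_{ex}(\C,W))$, the decomposition triangle
\[
P\to M\to A\to P[1]
\]
given by \eqref{sod2} has both pieces in $D^b$. Here $P\in K^-(\operatorname{Proj}_{\MF^n_{ab}(\C,W)})$ and $A\in\mathcal{K}^-$. Since $D^b$ is triangulated, it suffices to show that one of $P$ or $A$ lies in $D^b(\MF^n_{ex}(\C,W))$. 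I will also check that $\mathcal{K}^-\cap D^b(\MF^n_{ex})=\mathcal{K}$; this is essentially the definition of $\mathcal{K}$ as the acyclic complexes in $D^b(\MF^n_{ex})$.

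First I represent $M$ by a bounded complex of matrix factorizations. Concretely, $P$ is a projective resolution of $M$ in the abelian exact structure $\MF^n_{ab}(\C,W)$. It is built inductively, with terms of the form $\oplus_i\cP^i_{Q_i}$ where $Q_i$ is projective in $\C$ (by the earlier proposition on enough projectives in $\MF^n_{ab}$), and $A$ is the cone of $P\to M$. The key point is that $P$ is eventually acyclic for the \emph{graded-split} structure. By the characterization recalled in Section~\ref{secprel}, that is exactly membership in $D^b(\MF^n_{ex})$, i.e. the differentials far to the left split into admissible short exact sequences that are split in each graded component.

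To prove this, I use finite projective dimension of $\C$, say $d$. Far to the left of the bounded support of $M$, the complex $P$ is exact, and its syzygy objects $Z_k$ have graded components $(Z_k)_i$ that are $d$-th or higher syzygies in $\C$ of the components of $M$. Hence for $k$ beyond degree (support of $M$) $-d$ all $(Z_k)_i$ are projective in $\C$, since kernels of epimorphisms between projectives are projective once the dimension is exhausted. The short exact sequences $Z_{k}\hookrightarrow P_k\twoheadrightarrow Z_{k+1}$ are then exact sequences of matrix factorizations whose graded components are exact sequences of projectives, and these split in $\C$. So they are admissible in $\MF^n_{ex}$, and $P$ is eventually acyclic for the graded-split structure. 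Therefore $P\in D^b(\MF^n_{ex})$, and so is $A$.

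The main obstacle is the syzygy step: I need that the components of $P$ behave like componentwise projective resolutions. This holds because the functor $\Gr_n$ to graded objects is exact for the abelian structure and sends $\cP^i_Q$ to objects with projective components. Thus $\Gr_n(P)$ is a projective resolution of $\Gr_n(M)$ in $\Gr_n(\C)$, which has projective dimension $d$, so the dimension-shifting argument applies componentwise. One subtlety is that $M$ itself is a bounded complex with arbitrary entries, so I need the usual hyperresolution/Cartan–Eilenberg-type bound: a bounded complex over a category of projective dimension $d$ has resolution syzygies that are projective beyond $d$ plus its length. This is standard and can be proved by induction on the length using cones.
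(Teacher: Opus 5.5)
Your proof is correct and follows the same route as the paper. Both arguments show that the component in $K^-(\operatorname{Proj}_{\MF_{ab}^n(\C,W)})$, i.e.\ the abelian projective resolution, of an object of $D^b(\MF^n_{ex}(\C,W))$ is eventually acyclic for the graded-split structure, because its terms have projective components and $\C$ has finite projective dimension. The paper reduces to complexes concentrated in degree $0$ using generation and exactness of the projection functor, while you treat bounded complexes directly with a dimension-shifting bound; you also spell out the componentwise syzygy-splitting step that the paper leaves implicit.
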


\begin{proof}
Consider the projection functor \[
P\colon  D^-(\MF_{ex}^n(\C,W))\to K^{-}(\operatorname{Proj}_{\MF_{ab}^n(\C,W)})
\] which takes a projective resolution of a complex. The claim will follow if we prove that this functor preserves the subcategory $D^b(\MF_{ex}^n(\C,W))\subseteq D^-(\MF_{ex}^n(\C,W))$. Since $D^b(\MF_{ex}^n(\C,W))$ is generated by the complexes concentrated in degree $0$, by functoriality it is enough to check the condition on those. Here the functor $P$ acts simply by taking projective resolutions (in the abelian sense), carrying an object $M\in \MF^n(\C,W)$ to a resolution\[
PM=\ldots \to P_1 \to P_0
\] where each $P_i$ is projective in $\MF^n_{ab}(\C,W)$. This resolution may well be infinite -- the category $\MF^n_{ab}(\C,W)$ does not necessarily have finite projective dimension. However by Lemma \ref{projectiveabelian} each $P_i$ has projective components, and since $\C$ (and hence a fortiori also its category of graded objects) has finite projective dimension, there must exist an index after which the sequence admits a splitting as a complex of graded objects (again, the splittings may not be morphisms of matrix factorizations). This is however precisely what it means for $PM$ to lie in $D^b(\MF_{ex}^n(\C,W))\subseteq D^-(\MF_{ex}^n(\C,W))$, and we are done.
\end{proof}

\begin{proposition}\label{affineprop}
    The semiorthogonal decomposition of $D^b(\MF_{ex}^n(\C, W))$ descends to a semiorthogonal decomposition of \[
    \frac{D^b(\MF_{ex}^n(\C, W))}{K^b(\operatorname{ProjInj})}\cong \Stab({\MF^n_{ex}(\C, W)})=\langle \Stab({\MF^n_{\operatorname{proj}}(\C, W)}), \operatorname{Tot}(\mathcal{K})\rangle.
    \] In particular, we obtain an equivalence \[
    D^{\operatorname{abs}}\MF^n(\C, W)\cong \Stab({\MF^n_{\operatorname{proj}}(\C, W)})
    \] between the absolute derived category of matrix factorizations and the stable category of factorizations with projective components considered in \cite{tribone2021matrixfactorizationsfactors}.
\end{proposition}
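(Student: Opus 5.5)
The plan is to apply Lemma \ref{sodquotient} to the semiorthogonal decomposition
\[
D^b(\MF^n_{ex}(\C, W))=\langle \mathcal{P}, \mathcal{K}\rangle, \qquad \mathcal{P}:=K^{-}(\operatorname{Proj}_{\MF_{ab}^n(\C,W)}) \cap D^b(\MF^n_{ex}(\C, W)),
\]
from the preceding Proposition, with $\cS=K^b(\operatorname{ProjInj})$. By the remark after Lemma \ref{sodquotient}, it suffices to show that $\cS$ lies in one of the two factors; I will show $\cS\subseteq \mathcal{P}$.

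The generators of $\cS$ are the trivial factorizations $\cP^i_X$ placed in degree $0$, with $X$ arbitrary in $\C$. Such an object is generally not projective in $\MF^n_{ab}$, so the inclusion must be checked after replacing it by its $\mathcal{P}$-component. Choose a finite projective resolution $Q_\bullet\to X$ in $\C$, which exists because $\C$ has finite projective dimension. The functor $\cP^i$ is exact, so $\cP^i_{Q_\bullet}\to \cP^i_X$ is a finite resolution by objects that are projective in $\MF^n_{ab}$ (earlier Proposition) and projective-injective in $\MF^n_{ex}$ (Proposition \ref{charprojPx}). The cone of this resolution is a bounded complex of objects of the form $\cP^i_{(-)}$, and it is acyclic in the abelian sense, so it lies in $\mathcal{K}$. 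Its first term, $\cP^i_{Q_\bullet}$, lies in $K^b$ of objects that are simultaneously in $\operatorname{ProjInj}$ and in $\operatorname{Proj}_{\MF_{ab}}$.

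This shows that both components of each generator lie in $\cS$. Hence $\cS$ is compatible with the SOD in the sense of Lemma \ref{sodquotient}. Compatibility passes to triangles and summands because the projection functors are exact, so it holds on all of $\cS$ and not just on generators. Lemma \ref{sodquotient} then gives
\[
\frac{D^b(\MF^n_{ex})}{K^b(\operatorname{ProjInj})}=\langle Q(\mathcal{P}),\, Q(\mathcal{K})\rangle,
\]
and under Proposition \ref{totfrobenius} the second factor $Q(\mathcal{K})$ is identified with $\operatorname{Tot}(\mathcal{K})$.

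It remains to identify $Q(\mathcal{P})$ with $\Stab(\MF^n_{\operatorname{proj}})$, which I expect to be the main obstacle. Write $\mathcal{P}'$ for the subcategory of $K^-$ of complexes of objects that are projective in $\MF_{ab}$; each such object lies in $\MF^n_{\operatorname{proj}}$ by Proposition \ref{projectiveabelian}. This gives a functor
\[
\mathcal{P}'\cap D^b(\MF^n_{ex}) \cong D^b(\MF^n_{ex}(\operatorname{Proj}_\C,W)),
\]
with the equivalence holding up to boundedness. The eventual splitting is graded-split, so the complexes are eventually acyclic in the exact structure of $\MF^n_{\operatorname{proj}}$; this makes the functor $\Tot$ of $\MF^n_{\operatorname{proj}}$ applicable. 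I would then:
\begin{enumerate}
\item compare the quotient $\mathcal{P}/(\mathcal{P}\cap\cS)$ with $D^b(\MF^n_{\operatorname{proj}})/K^b(\operatorname{ProjInj}_{\operatorname{proj}})$ via Proposition \ref{totfrobenius};
\item use the fully faithful Lemma on stable categories to see that the resulting functor into $\Stab(\MF^n(\C,W))$ is fully faithful;
\item obtain essential surjectivity onto $Q(\mathcal{P})$ by noting that every object of $\mathcal{P}$ has $\Tot$ equal to a syzygy, which is a factorization with projective entries.
\end{enumerate}
The delicate point is the bookkeeping between the degreewise-projective complexes in the two ambient categories.

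Finally, quotient the decomposition by $\operatorname{Tot}(\mathcal{K})$. This kills the right factor, so the quotient $D^{\operatorname{abs}}\MF^n(\C,W)$ is equivalent to the left factor, and that factor is $\Stab(\MF^n_{\operatorname{proj}}(\C,W))$, which is the claimed equivalence.
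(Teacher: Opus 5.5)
Your proposal is correct and follows essentially the same route as the paper: Lemma~\ref{sodquotient} applied to $\langle\mathcal{P},\mathcal{K}\rangle$ with $\cS=K^b(\operatorname{ProjInj})$, with compatibility checked on the generators $\cP^i_X$ by applying the exact functor $\cP^i$ to a finite projective resolution of $X$, and $\operatorname{Tot}(\mathcal{P})$ identified with $\Stab(\MF^n_{\operatorname{proj}}(\C,W))$ via the syzygy argument plus the full faithfulness lemma. One slip to fix: your announced goal $\cS\subseteq\mathcal{P}$ is false in general (for non-projective $X$ the $\mathcal{K}$-component of $\cP^i_X$ is the augmented resolution, which is not graded-split and so is nonzero in $D^b(\MF^n_{ex}(\C,W))$), but what you actually verify, namely that both components lie in $\cS$, is exactly the hypothesis of Lemma~\ref{sodquotient}, so the argument stands.
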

\begin{proof}
    There are two things to check: first that the SOD descends to the quotient, and that the subcategory given by the image of $K^{-}(\operatorname{Proj}_{\MF_{ab}^n(\C,W)})\cap D^b(\MF_{ex}(\C, W))$ via the functor \[
    \Tot\colon D^b(\MF_{ex}(\C, W)\to \Stab({\MF^n_{ex}(\C, W)})
    \] corresponds to $\Stab({\MF^n_{\operatorname{proj}}(\C, W)})$. By Lemma \ref{sodquotient}, for the first claim we just need to check that for a bounded complex of projective-injective matrix factorizations (not necessarily with entries that are projective in $\C$), its projective resolution lies again in the subcategory $K^b(\operatorname{ProjInj)}$. Since the subcategory $K^b(\operatorname{ProjInj)}$ is generated under finite direct sums, cones and direct summands by factorizations of the form $\cP^i_X$ concentrated in degree $0$, it's enough to check the claim on objects of this form.  Since $\C$ has finite projective dimension, there exists a finite projective resolution \[
    0\to Q_n \to \ldots \to Q_1\to Q_0;
    \] of $X\in \C$. Now \[
    0\to \cP^i_{Q_n} \to \ldots \to \cP^i_{Q_1}\to \cP^i_{Q_0}
    \]gives a resolution of $\cP_{X}^i$, which clearly still lies in $K^b(\operatorname{ProjInj})$. 
    
    For the second claim, we use the description of the functor $\Tot$ discussed in Section \ref{frobeniusec}. Let \[
    P=\ldots \to P_2\to P_1\to P_0 \to P_{-1}\to 0
    \] be a complex in $K^{-}(\operatorname{Proj}(\MF_{ab}^n(\C,W))\cap D^b(\MF_{ex}(\C, W))$; since $P$ is eventually acyclic (in the graded-split exact structure), after some index $N$ the complex splits giving rise in particular to a graded split short exact sequence \[
   0\to Z_N\to P_N\to \Tot(P)\to 0.
    \] Since at the graded level $\Tot(P)$ is a summand of $P_N$ and $P_N$ has projective components, the same must hold for $\Tot(P)$. Vice versa, it follows from the explicit construction of the projective covers that any resolution of a graded projective matrix factorization can be built out of graded projective modules.
\end{proof}

\section{Matrix factorizations and root stacks}\label{secgeom}
In this section we establish a link between the theory of $n$-step matrix factorizations and the singularity category of the root stack. This will allow us to deduce a version of Orlov's theorem for $n$-step matrix factorizations. 

Assume that $X$ is a smooth variety. 
Let $W:X \to \mathbb{A}^1$ be a  flat morphism. Note that by  so  called \emph{miracle flatness}, the morphism $W$ is automatically flat as soon as it is given by a regular function which is non-constant on each of the connected components of $X$. Recall that we have established a general identification 
\begin{equation}
\label{identif}
D^{\operatorname{abs}}\MF^{n}(\C, W)\cong \frac{D^b(\MF^n_{ab}(\C, W))}{K^b(\operatorname{ProjInj)}}
\end{equation}
Throughout this section $\C=\operatorname{Coh}(X)$ is the abelian category of coherent sheaves on $X$, and $W$ is given by the regular function $W:X \to \mathbb{A}^1$.  We shall adopt the notation 
$$
\MF^n(X, W):=D^{\operatorname{abs}}\MF^n(\C, W)
$$
When $n=2$ the category $\MF^2(X, W)$ coincides with the ordinary category of matrix factorizations, so we   denote  it simply  $\MF(X,W)$.

First, we need to recall some preliminaries from the theory of root stacks. We refer to \cite{talpo2018infinite} and references therein for a comprehensive introduction to these objects.   We limit ourselves to   a very short summary, geared  to the applications we have in mind. 
Let $X$ be a variety equipped with a Cartier divisor $D$. This determines a  line bundle $\mathcal{O}(D)$ and a section
$
\sigma \in \mathcal{O}(D). 
$
such that $D=\sigma^{-1}(0).$ 
\begin{definition}
The $n$-th root stack 
$
\sqrt[n]{(X,D)} 
$ 
is a DM stack equipped with a flat map 
$$p:\sqrt[n]{(X,D)} \to X$$ which carries a universal $n$-th root of the line bundle $\mathcal{O}(D)$ and of $\sigma$.  
\end{definition}
The stack $\sqrt[n]{(X,D)}$ carries a stacky divisor $D_n$ which is in a precise sense the universal $n$-th root of $D$. The divisor $D_n$ is cut out by a section $\sqrt[n]{\sigma}$ of the line bundle $\mathcal{O}(D_n),$ and there are canonical isomorphisms
$$
p^*\mathcal{O}(D) \cong \mathcal{O}(D_n)^{\otimes n} \cong \mathcal{O}(n \cdot D_n), \quad \text{and} \quad  p^*\sigma = (\sqrt[n]{\sigma})^{\otimes n}
$$

In our setting $\sigma$ is given by the regular function $W$, viewed as a section of the structure sheaf, and $D:=W^{-1}(0)$. In this context, the root stack can be easily constructed in an explicit way. Let $\sqrt[n]{\mathbb{A}^1}$ be the $n$-th root stack of the pair $(\mathbb{A}^1, \{0\})$. This can be seen to be equal to 
$$
q: \sqrt[n]{(\mathbb{A}^1, \{0\})} \cong [\mathbb{A}^1/\mu_n] \to \mathbb{A}^1
$$
where $[\mathbb{A}^1/\mu_n]$ is the quotient stack by the standard action of $n$-th roots of unity, and $p$ is   the $n$-th power map. 
Then we have that $\sqrt[n]{(X, D)}$ can be obtained as the following fiber product
\[\begin{tikzcd}
	\sqrt[n]{(X, D)} & X   \\
	{[\mathbb{A}^1/\mu_n ]} & \mathbb{A}^1
	\arrow["p", from=1-1, to=1-2]
	\arrow["W", from=1-2, to=2-2]
	\arrow["{\sqrt[n]{W}}", from=1-1, to=2-1]
	\arrow["q", from=2-1, to=2-2]
\end{tikzcd}\]
\begin{proposition}
\label{propcoh}
    There is an equivalence of abelian categories between the category of $n$-matrix factorizations $\MF_{ab}^n(\operatorname{Coh}(X), W)$ and $\operatorname{Coh}(\sqrt[n]{(X,D)})$, inducing an equivalence of derived categories \[
    D^b(\MF^n_{ab}(\Coh(X), W)) \cong D^b\Coh(\sqrt[n]{(X, D)}).
  \]
    Further, under this equivalence, the subcategory $K^b(\operatorname{ProjInj)}$ is generated, as a split-complete triangulated subcategory, by   $
p^*D^b\Coh(X)$ and its twists by the universal line bundle and its powers:  
    $$
\Big \langle p^*D^b\Coh(X), \, p^*D^b\Coh(X) \otimes \mathcal{O}(D_n), \, \ldots, \, p^*D^b\Coh(X) \otimes \mathcal{O}(D_n)^{\otimes n-1} \Big \rangle
    $$
\end{proposition}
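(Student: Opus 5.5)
The plan has three steps: identify the abelian categories, pass to derived categories, and then identify the images of the trivial factorizations.

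\emph{Step 1: the abelian equivalence.} Since $\sqrt[n]{(X,D)}$ is the fiber product $X\times_{\mathbb{A}^1}[\mathbb{A}^1/\mu_n]$, I work locally, and affine locally on $X=\operatorname{Spec} R$. The root stack is then $[\operatorname{Spec}(R[t]/(t^n-W))/\mu_n]$, with $\mu_n$ acting on $t$ by the standard character. A coherent sheaf on it is a finitely generated $\mathbb{Z}/n\mathbb{Z}$-graded module over $R[t]/(t^n-W)$ with $\deg t=1$. Such a module is exactly a chain
\[
M_1\tow{t}M_2\tow{t}\cdots\tow{t}M_n\tow{t}M_1
\]
of finitely generated $R$-modules whose $n$-fold composite is multiplication by $W$, i.e. an $n$-matrix factorization. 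This identification is functorial and compatible with localization, so it glues to an equivalence $\MF^n_{ab}(\Coh(X),W)\cong\Coh(\sqrt[n]{(X,D)})$. Globally, $\mathcal{O}(D_n)$ corresponds to the grading shift. Because the equivalence is exact for the abelian structures, it induces the equivalence of bounded derived categories.

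\emph{Step 2: where the trivial factorizations go.} By the remark after the Corollary in Section \ref{secabs}, $K^b(\operatorname{ProjInj})$ is generated by the trivial factorizations $\cP^i_F$, $F\in\Coh(X)$, placed in degree $0$. I compute $p^*F$: it is $F\otimes_R R[t]/(t^n-W)$, and as a graded module its components are all equal to $F$. Multiplication by $t$ is the identity except on the one component where $t^n=W$ is forced. So $p^*F\cong\cP^{i_0}_F$ for a fixed index $i_0$ determined by conventions. Twisting by $\mathcal{O}(D_n)^{\otimes k}$ shifts the grading by $k$ and sends $\cP^{i}_F$ to $\cP^{i+k}_F$. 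Hence the trivial factorizations are exactly the objects $p^*F\otimes\mathcal{O}(D_n)^{\otimes k}$ for $0\le k\le n-1$. Since $p$ is flat, $p^*$ is exact, so $p^*D^b\Coh(X)\otimes\mathcal{O}(D_n)^{k}$ is the thick closure of these objects in degree $0$. This shows that $K^b(\operatorname{ProjInj})$ is the thick span of the $n$ twisted pullbacks.

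\emph{Step 3: semiorthogonality.} To upgrade the span to a semiorthogonal decomposition in the stated order, I use the adjunction $p^*\dashv p_*$. This gives
\[
\Hom(p^*F\otimes\mathcal{O}(D_n)^{j},\,p^*G\otimes\mathcal{O}(D_n)^{i})=\Hom_X\bigl(F,\,G\otimes p_*\mathcal{O}(D_n)^{i-j}\bigr).
\]
In the graded-module description, $p_*\mathcal{O}(D_n)^{m}$ is the degree $-m$ (or $m$, depending on conventions) component of $R[t]/(t^n-W)$ twisted appropriately. For $0<m<n$ and the correct sign this component is $\mathcal{O}_X(-D)$ versus $\mathcal{O}_X$, so the vanishing is not automatic. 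I would instead invoke Bodzenta--Donovan \cite{bodzenta2024root}, or the standard decomposition $D^b\Coh(\sqrt[n]{(X,D)})=\langle D^b(D)_{\ldots},\,p^*D^b\Coh(X)\rangle$, and check that the pieces in the order written are semiorthogonal, possibly after reindexing the twists.

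I expect Step 3, the semiorthogonality and its ordering, to be the main obstacle. Steps 1 and 2 are bookkeeping. It is possible that only the generation statement is needed here, with the semiorthogonal form used later in Corollary \ref{nsteporlov}.
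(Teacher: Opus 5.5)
Your Steps 1 and 2 are correct and are essentially the paper's proof. The paper cites \cite{talpo2018infinite} for the abelian equivalence; your description of $\mu_n$-equivariant sheaves on $\operatorname{Spec} R[t]/(t^n-W)$ as $\mathbb{Z}/n\mathbb{Z}$-graded modules is the standard argument behind that reference. The paper then argues exactly as you do: $p^*\mathcal{F}$ is the trivial factorization $\mathcal{F}\tow{=}\cdots\tow{=}\mathcal{F}\tow{\cdot W}\mathcal{F}$, twisting by $\mathcal{O}(D_n)$ cyclically moves the slot carrying $W$, and $K^b(\operatorname{ProjInj})$ is generated by trivial factorizations in degree $0$.

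Step 3, however, should be dropped rather than completed, because what it aims at is false. The angle brackets in this statement only mean the thick span. The statement itself says ``generated, as a split-complete triangulated subcategory'', and the same category is written $\mathrm{span}\{\ldots\}$ in the proof of Proposition \ref{geommain1}. The $n$ twisted copies of $p^*D^b\Coh(X)$ are not semiorthogonal in any order, and your own formula already shows this. One has $p_*\mathcal{O}(mD_n)\cong\mathcal{O}_X(\lfloor m/n\rfloor D)$, which for $|m|\le n-1$ is $\mathcal{O}_X$ or $\mathcal{O}_X(-D)$. Moreover $\mathcal{O}_X(-D)\cong\mathcal{O}_X$, since $D$ is cut out by the function $W$. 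Hence
\[
\Hom\bigl(p^*F\otimes\mathcal{O}(D_n)^{\otimes j},\,p^*G\otimes\mathcal{O}(D_n)^{\otimes i}\bigr)\cong\Hom_X(F,G)
\]
for all $0\le i,j\le n-1$, which is nonzero for $F=G=\mathcal{O}_X$. Reindexing the twists or invoking \cite{bodzenta2024root} will not fix this. That decomposition consists of a single copy of $p^*D^b\Coh(X)$ together with twists of $D^b\Coh(D)$, and the paper only uses it later, in Corollary \ref{nsteporlov}. With Step 3 deleted, your argument is a complete proof of the proposition.
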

\begin{proof}
The first statement is standard, and follows from the general theory  developed   in \cite{talpo2018infinite}. As for the second statement, it is clear once we identify $K^b(\operatorname{ProjInj)}$ with the subcategory generated by trivial factorizations. Since $p$ is flat, the pull-back $p^*$ is exact. Then $p^*D^b\Coh(X)$ can be described as the triangulated sub-category generated by trivial factorizations of the form
\begin{equation}
\label{triv}
\mathcal{F} \stackrel{=} \to \mathcal{F} 
\stackrel{=} \to \ldots \stackrel{\cdot W} \to \mathcal{F}
\end{equation}
where $\mathcal{F}$ is a coherent sheaf on $X$. The effect of tensoring this with a power of $\mathcal{O}(D_n)$ is simply to shift 
cyclically these trivial factorizations to the right of a   number of steps equal to the power of the twist we are applying. Concretely, tensoring this object with $\mathcal{O}(D_n)$ will yield the trivial factorization
$$
\mathcal{F} \stackrel{\cdot W} \to \mathcal{F} 
\stackrel{=} \to \ldots  \stackrel{=}  \to \mathcal{F}
$$
and similarly when we twist by a higher power of $\mathcal{O}(D_n)$. 
Since all projective-injective factorizations are generated by trivial factorizations of the type (\ref{triv}), and their twists, this concludes the proof.
\end{proof}

We record next the definition of singularity category. Classically, this is mostly studied for schemes, but  we shall need it in the slightly more general setting of DM stacks.

\begin{definition}
     Let $S$ be a DM stack. Then its  singularity category, denoted by $D^b\Sing(S)$, is defined as  the following localization
     $$
     \Perf(S) \hookrightarrow  D^b\Coh(S) \to  D^b\Sing(S)
     $$
\end{definition}

We will be interested in the singularity category of root stacks. Both the bounded derived category and   the category of perfect complexes of a root stack carry natural    semi-orthogonal decompositions. This will be helpful  in studying their singularity category. For the category of perfect complexes, some of the relevant references are \cite{ishii2015special}, 
\cite{bergh2016geometricity}, and 
\cite{scherotzke2020parabolic}; an explicit construction of a semi-orthogonal decomposition for the bounded derived category of coherent sheaves on a root stack was obtained in 
\cite{bodzenta2024root}.

As far as we know, the following proposition has not been explicitly stated in the literature even in the classical case $n=2$, even though in that setting it is certainly well-known to experts.
\begin{proposition}
\label{geommain1}
Assume that $X$ is a smooth variety. 
Let $W:X \to \mathbb{A}^1$ be a flat morphism, and let $D=W^{-1}(0)$. Then there is an equivalence of categories 
$$
\MF^n(X, W) \cong  D^b\Sing(\sqrt[n]{(X, D)})
$$
\end{proposition}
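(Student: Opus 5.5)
The plan is to combine the identification \eqref{identif} with Proposition \ref{propcoh}, which reduces the statement to a comparison of two thick subcategories of $D^b\Coh(\sqrt[n]{(X,D)})$.

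By \eqref{identif} we have
\[
\MF^n(X,W)\cong D^b(\MF^n_{ab}(\Coh(X),W))/K^b(\operatorname{ProjInj}).
\]
Proposition \ref{propcoh} identifies the numerator with $D^b\Coh(\sqrt[n]{(X,D)})$. Under this identification, $K^b(\operatorname{ProjInj})$ becomes the thick subcategory $\mathcal{G}$ generated by $p^*D^b\Coh(X)\otimes\mathcal{O}(D_n)^{\otimes k}$ for $0\le k\le n-1$. It therefore suffices to prove
\[
\mathcal{G}=\Perf(\sqrt[n]{(X,D)})
\]
as subcategories of $D^b\Coh(\sqrt[n]{(X,D)})$. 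Both sides are thick triangulated subcategories, since $\Perf$ is closed under summands. Taking Verdier quotients then yields the claimed equivalence with $D^b\Sing(\sqrt[n]{(X,D)})$.

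\emph{Inclusion $\mathcal{G}\subseteq\Perf$.} Since $X$ is smooth, every object of $D^b\Coh(X)$ is perfect. Pullback along $p$ preserves perfect complexes, as does tensoring with the line bundles $\mathcal{O}(D_n)^{\otimes k}$. Hence each generator of $\mathcal{G}$ is perfect, and so is everything in the thick closure.

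\emph{Inclusion $\Perf\subseteq\mathcal{G}$.} This is the main step, and I would attack it in two ways.

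The first is to invoke the known semiorthogonal decomposition of perfect complexes on a root stack (\cite{ishii2015special, bergh2016geometricity}):
\[
\Perf(\sqrt[n]{(X,D)})=\big\langle \Perf(D)_{1},\dots,\Perf(D)_{n-1}, p^*\Perf(X)\big\rangle .
\]
Here the components $\Perf(D)_k$ are pushforwards from the gerbe over $D$, twisted by powers of $\mathcal{O}(D_n)$. Each such piece should lie in $\mathcal{G}$. Concretely, the structure sheaf of the stacky divisor sits in the exact sequence
\[
0\to\mathcal{O}(-D_n)\to\mathcal{O}\to\mathcal{O}_{D_n}\to 0,
\]
so twisted pieces of the form $i_*(\text{perfect on }D_n)\otimes\mathcal{O}(kD_n)$ are generated by $p^*\Perf(X)\otimes\mathcal{O}(kD_n)$, using that $X$ is smooth so $p^*$ of a resolution makes sense.

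The second, more self-contained route works in the language of factorizations. A perfect complex on $\sqrt[n]{(X,D)}$ is locally a bounded complex of vector bundles. Since $X$ is smooth, every object can be resolved by locally free objects, and on the stack every vector bundle is (at least locally on $X$) a direct sum of the line bundles $\mathcal{O}(kD_n)$ tensored with vector bundles pulled back from $X$. In the factorization picture, the vector bundles are exactly the factorizations with locally free components that are projective in the graded-split sense, i.e. summands of sums of trivial factorizations $\cP^i_{\mathcal{F}}$.

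The main obstacle is making this second route global. I would show that every perfect object is quasi-isomorphic to a bounded complex of vector bundles on the stack, which holds because the root stack is a quotient stack with the resolution property when $X$ is smooth and quasi-projective. I would then check that each vector bundle $\mathcal{E}$ lies in $\mathcal{G}$. For this, use the canonical short exact sequence of factorizations
\[
0\to K\to\bigoplus_i \cP^i_{\mathcal{E}_i}\to \mathcal{E}\to 0
\]
from Proposition \ref{existenceprojinj}. This sequence is graded split with locally free entries. The claim is that $\mathcal{E}$ itself is already a summand of $\bigoplus_i\cP^i_{\mathcal{E}_i}$, because locally on $X$ vector bundles on $[\mathbb{A}^1/\mu_n]\times_{\mathbb{A}^1}X$ decompose into sums of $\mathcal{O}(kD_n)$. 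Failing a global splitting, the fallback is the semiorthogonal decomposition route above, quoting \cite{bergh2016geometricity} for the generation statement.
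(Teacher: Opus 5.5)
Your reduction to the equality $\mathcal{G}=\Perf(\sqrt[n]{(X,D)})$ and the inclusion $\mathcal{G}\subseteq\Perf$ are exactly as in the paper. Your first route for the reverse inclusion rests on the same semiorthogonal decomposition of \cite{ishii2015special, bergh2016geometricity}. Your way of placing $\mathcal{O}_{D_n}$ in $\mathcal{G}$ is more direct than the paper's, which uses flat base change to $[\mathbb{A}^1/\mu_n]$ and then splits the cone in a four-term sequence. In your version $\mathcal{O}_{D_n}$ is the cone of $\mathcal{O}(-D_n)\to\mathcal{O}$, and $\mathcal{O}(-D_n)\cong p^*\mathcal{O}_X(-D)\otimes\mathcal{O}(D_n)^{\otimes n-1}$ lies in $\mathcal{G}$; you should state this explicitly.

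The gap is the step after that. Write $j\colon D_n\to\sqrt[n]{(X,D)}$, $\pi\colon D_n\to D$ and $\iota\colon D\to X$. The pieces of the decomposition are $j_*\pi^*\Perf(D)\otimes\mathcal{O}(rD_n)$, so you need $j_*\pi^*E\in\mathcal{G}$ for every $E\in\Perf(D)$, not only for $E=\mathcal{O}_D$. The phrase ``$p^*$ of a resolution'' does not give this. Indeed $p^*\iota_*E$ is an iterated extension of all $n$ twists $j_*\pi^*E\otimes\mathcal{O}(-mD_n)$, $0\le m\le n-1$, and it does not isolate any single one of them. The paper closes this by first reducing Zariski-locally to affine $X$, where $\Perf(D)$ is the thick closure of $\mathcal{O}_D$, so only $\mathcal{O}_{D_n}$ and its twists are needed. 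Alternatively, tensor your sequence with $p^*F$ for $F\in D^b\Coh(X)$. The projection formula gives $p^*F\otimes\mathcal{O}_{D_n}\cong j_*\pi^*(L\iota^*F)\in\mathcal{G}$. The thick closure of $L\iota^*\Perf(X)$ is all of $\Perf(D)$ if $D$ is affine, or if $X$ is quasi-projective (powers of an ample line bundle generate). This version even avoids the local-to-global reduction.

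Your second route should be dropped, because its key claim is false globally. A vector bundle on the root stack is only \emph{locally} a sum of trivial factorizations, and it need not be projective-injective in $\MF^n_{ex}$. For instance, take $n=2$, $X=\mathbb{A}^2\setminus\{0\}$ and $W=xy$, so $D$ is the disjoint union of the two punctured axes. The factorization $\mathcal{O}_X\xrightarrow{x}\mathcal{O}_X\xrightarrow{y}\mathcal{O}_X$ is a line bundle on $\sqrt[2]{(X,D)}$: on $\{x\neq 0\}$ and on $\{y\neq 0\}$ it is isomorphic to a trivial factorization. However, its identity factors through a projective-injective (equivalently, through its injective hull, a sum of the two trivial factorizations on $\mathcal{O}_X$) only if $ax+by=1$ for some $a,b\in\mathcal{O}(X)=k[x,y]$. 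This is impossible, as one sees by evaluating at the origin. So the canonical sequence $0\to K\to\bigoplus_i\cP^i_{\mathcal{E}_i}\to\mathcal{E}\to 0$ does not split in general. Such objects vanish only in the quotient $D^{\operatorname{abs}}\MF^n$, which is essentially the statement being proved.
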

\begin{proof}
Combining the key equivalence (\ref{identif}) and Proposition \ref{propcoh}, it is enough to prove that the following subcategories of $D^b\Coh(\sqrt[n]{(X, D)})$ are equal: 
\begin{enumerate}
\item The first is the subcategory generated by $p^*D^b\Coh(X)$ and its twists, that is 
$$
\mathrm{span}\{ p^*D^b\Coh(X), \, p^*D^b\Coh(X) \otimes \mathcal{O}(D_n), \, \ldots, \, p^*D^b\Coh(X) \otimes \mathcal{O}(D_n)^{\otimes n-1} \}
    $$
    \item The second is the category of perfect complexes on $\sqrt[n]{(X, D)}$, 
    $$
    \Perf(\sqrt[n]{(X, D)})
    $$
\end{enumerate}
Now note that there is an obvious inclusion of category $(1)$ inside category $(2)$. Indeed by assumption $X$ is smooth, hence there is an identification 
$$
\Perf(X) \cong D^b\Coh(X).
$$
Further perfect complexes are stable under pull-backs, and the tensor product of perfect complexes is again a perfect complex. So all the generators of category $(1)$ are contained in $
    \Perf(\sqrt[n]{(X, D)})
    $; and hence  also the subcategory they generate is contained in $
    \Perf(\sqrt[n]{(X, D)})
    $.  

    Let us now prove the opposite inclusion. The categories involved in our statement have   good local-to-global behaviour: in particular, they satisfy descent with respect to the pull-back of Zariski open covers along the root map 
    $$
    p: \sqrt[n]{(X, D)} \to X 
    $$
This allows us to reduce to the case when $X$ is affine.  

It follows from the semi-orthogonal decompositions constructed in \cite{ishii2015special, bergh2016geometricity}  that $\Perf(\sqrt[n]{(X, D)})$ is generated by the following subcategories: 
\begin{itemize}
\item The image of the category of perfect complexes on $X$ along $p^*$, $p^*(\Perf(X))$
\item Full subcategories $\Perf(D_n)_\chi$, which are indexed by characters  $\chi$ of $\mu_n$.  
 Each of these is equivalent to the category of perfect complexes on $D$,
$$
\Perf(D_n)_\chi \cong \Perf(D)
$$
\end{itemize}
The reader can find    the definition of $\Perf(D_n)_\chi$ in  \cite{bergh2016geometricity}. We will not recall it here  as in fact we do not need it. Indeed, since  $X$ is affine, we can express these  subcategories  in a more direct way. The divisor $D$ is  affine, and hence its category of perfect complexes is  generated by the structure sheaf
$$
\Perf(D) \cong \mathrm{span}\{\mathcal{O}_D \}.
$$
So instead of looking at the subcategories $\Perf(D_n)_\chi$, we can just consider as generators the structure sheaf of the stacky divisor $D_n$ on $\sqrt[n]{(X, D)}$,  and its twists by $\mathcal{O}(D_n)$ and its powers. This  implies that in order to prove that $
    \Perf(\sqrt[n]{(X, D)})
    $ is contained in category $(1)$, it is sufficient to show that the sheaf  $\mathcal{O}_{D_n}$ lies in category $(1)$. Indeed, then its twists would also  be automatically contained in category $(1)$.

We can reduce to the  case when the pair $(X,D)$ is given by $(\mathbb{A}^1, 0)$, where we denote by   
$
q: \sqrt[n]{\mathbb{A}^1} \to \mathbb{A}^1
$ 
  the root map. This follows from flat base-change applied to the fiber product 
\[\begin{tikzcd}
	D_n & \sqrt[n]{(X, D)}   \\
	 B \mu_n & {[\mathbb{A}^1/\mu_n ].}
	\arrow["j", from=1-1, to=1-2]
	\arrow["{\sqrt[n]{W}}", from=1-2, to=2-2]
	\arrow["l", from=1-1, to=2-1]
	\arrow["i", from=2-1, to=2-2]
\end{tikzcd}\]
In the diagram above $B \mu_n$ is the $n$-th root of the divisor $\{0\}$ inside $$[\mathbb{A}^1/\mu_n ] \cong \sqrt[n]{\mathbb{A}^1}$$  Note that the map $\sqrt[n]{W}$ is flat because it is obtained by base-change from the regular map $W$, which is flat. Flat base change yields an  equivalence 
\begin{equation}
\label{flatbasechange}
\mathcal{O}_{D_n} \cong j_*(l^*\mathcal{O}_{B\mu_n}) \cong \sqrt[n]{W}^*(i_* \mathcal{O}_{B\mu_n}).
\end{equation}
 Thanks to (\ref{flatbasechange}), in order to prove the claim it is enough  to show that $i_* \mathcal{O}_{B\mu_n}$ lies in the subcategory of $D^b\Coh(\sqrt[n]{\mathbb{A}^1})$ generated by 
 $q^*(D^b\Coh( \mathbb{A}^1))$, and its twists.  
 Note that indeed the subcategory 
$$
\operatorname{span}\{ q^*D^b\Coh(\mathbb{A}^1),  
\, \ldots, \, q^*D^b\Coh(\mathbb{A}^1) \otimes \mathcal{O}(\{0\}_n)^{\otimes n-1} \} \subset D^b\Coh(\sqrt[n]{\mathbb{A}^1})
$$
 pulls back along $\sqrt[n]{W}^*$ to the subcategory $(1)$ of $D^b\Coh(\sqrt[n]{(X, D)} )$. 
  
So we are reduced to the case of the pair  $(\mathbb{A}^1,0)$. This can be proved via a simple direct calculation. Note that since $\sqrt[n]{\mathbb{A}^1}$ is isomorphic to the global quotient $[\mathbb{A}^1/\mu_n ]$, we can view coherent sheaves on this stack as $\mu_n$-equivariant sheaves on $\mathbb{A}^1,$ i.e. as $\mu_n$-equivariant $k[T]$-modules. The universal line bundle on $\sqrt[n]{\mathbb{A}^1}$ corresponds to $k[T]$ equipped with the $\mu_n$-equivariant structure such that $k$ has weight $1$. 
 Let $k(0)$ be the skyscraper sheaf at $0 \in \mathbb{A}^1.$ Then $q^*(k(0))$ is given by the equivariant $K[T]$-module 
 $k[T]/T^n$  such that $k$ has weight $0$ (and hence $T$ has weight $1$). It is convenient to keep track of  the equivariant structure, so we will denote this object by
 $$
 (k[T]/T^n)_0
 $$
 Similarly we shall denote 
 $$
 (k[T]/T^n)_m
 $$
 the same object but with the equivariant structure which places $k$ in weight $m$.

 Now consider the following four term exact sequence of equivariant $K[T]$-modules
 \begin{equation}
 \label{4terms}
 0 \to k_0 \to (k[T]/T^n)_1 \stackrel{- \cdot T }\to (k[T]/T^n)_0 \to k_0 \to 0
\end{equation}
The equivariant $K[T]$-module $k_0$, i.e. $k$ equipped with the trivial $\mu_n$-action,  corresponds to the structure sheaf of the stacky divisor $\{0\}_n$ on $\sqrt[n]{\mathbb{A}^1}$. Note that $k_0$ has no non-trivial 
self-extensions. Hence, sequence (\ref{4terms}) implies that we have an isomorphism in $D^b\Coh(\sqrt[n]{\mathbb{A}^1})$
$$
\Cone \Big ((k[T]/T^n)_1 \stackrel{- \cdot T }\to (k[T]/T^n)_0) \Big ) \cong k_0 \oplus k_0[1]
$$
We see therefore that $k_0$ can be expressed as a summand of the cone of a map between an object in $q^*D^b\Coh(\mathbb{A}^1)$ and an object in $q^*D^b\Coh(\mathbb{A}^1) \otimes \mathcal{O}(\{0\}_n)$. In particular, it is contained in the subcategory
$$
\mathrm{span}\{ q^*D^b\Coh(\mathbb{A}^1),  
\, \ldots, \, q^*D^b\Coh(\mathbb{A}^1) \otimes \mathcal{O}(\{0\}_n)^{\otimes n-1} \} \subset D^b\Coh(\sqrt[n]{\mathbb{A}^1})
$$
and this concludes the proof.
\end{proof}
We place ourselves under the assumptions of Proposition \ref{geommain1}. Recall that  $D$  is the central fiber $W^{-1}(0)$.  One of the key  results in the theory of matrix factorizations is a theorem of Orlov \cite{orlov2004triangulated}  stating that there is an equivalence 
$$
\MF(X, W) \cong D^b\Sing(D)
$$
The next result gives an analogue of Orlov's theorem in the setting of $n$-fold matrix factorizations.
\begin{corollary}
\label{nsteporlov}
There is a semi-orthogonal decomposition of length $n-1$ 
$$
\MF^n(X, W) \cong  
\langle D^b\Sing(D), \ldots , 
D^b\Sing(D)
\rangle 
\cong 
\langle \MF(X, W), \ldots , 
\MF(X, W)
\rangle
$$
\end{corollary}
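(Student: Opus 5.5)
By Proposition \ref{geommain1}, $\MF^n(X,W)\cong D^b\Sing(\sqrt[n]{(X,D)})$. It therefore suffices to produce a semiorthogonal decomposition of $D^b\Sing(\sqrt[n]{(X,D)})$ with $n-1$ components, each equivalent to $D^b\Sing(D)$. The second equivalence in the statement then follows by applying Orlov's theorem $\MF(X,W)\cong D^b\Sing(D)$ to each component.

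The plan is to start from the Bodzenta--Donovan semiorthogonal decomposition \cite{bodzenta2024root} of the bounded derived category of the root stack:
$$
D^b\Coh(\sqrt[n]{(X,D)})=\big\langle j_*l^*D^b\Coh(D)\otimes\mathcal{O}(D_n)^{\otimes k_1},\ \ldots,\ j_*l^*D^b\Coh(D)\otimes\mathcal{O}(D_n)^{\otimes k_{n-1}},\ p^*D^b\Coh(X)\big\rangle.
$$
This consists of $n-1$ copies of $D^b\Coh(D)$, embedded via the stacky divisor $D_n$ and twisted by successive powers of $\mathcal{O}(D_n)$, together with one copy of $D^b\Coh(X)$ pulled back along $p$. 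Alongside it we use the analogous decomposition of $\Perf(\sqrt[n]{(X,D)})$ from \cite{ishii2015special, bergh2016geometricity}, which has the same shape with $\Perf(D)$ in place of $D^b\Coh(D)$ and $\Perf(X)=D^b\Coh(X)$ in the last slot, since $X$ is smooth. The key observation is that the two decompositions are compatible: the perfect pieces sit inside the coherent pieces under the same embedding functors. We then quotient by $\cS=\Perf(\sqrt[n]{(X,D)})$ and apply Lemma \ref{sodquotient}, iterated to handle a decomposition with more than two pieces.

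To check the hypothesis of Lemma \ref{sodquotient}, we must show that the projection functors of the $D^b\Coh$-decomposition send perfect complexes to perfect complexes, i.e.\ that $\Perf$ is generated by its intersections with the pieces. This is exactly what the compatibility gives: the projections of a perfect object, computed inside the $\Perf$-decomposition, agree with those computed in $D^b\Coh$ by uniqueness of the filtration triangles. In the quotient, the factor $p^*D^b\Coh(X)=p^*\Perf(X)$ lies entirely in $\cS$ and disappears. Each of the remaining $n-1$ factors becomes $D^b\Coh(D)/\Perf(D)=D^b\Sing(D)$. For this we need that the embedding $j_*l^*(-)\otimes\mathcal{O}(D_n)^{\otimes k}$, restricted to $\Perf(D)$, has image exactly $\cS\cap(\text{that piece})$. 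That holds if the embedding sends non-perfect complexes to non-perfect complexes; I would argue this by pulling back along $l$, or simply from the matching of components in the two decompositions.

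The main obstacle is this compatibility: identifying the Bodzenta--Donovan components with those of Ishii--Ueda/Bergh--Lunts--Schnürer, so that $\Perf$ is exactly the sum of its intersections with the pieces. I would first verify it locally, reducing to $X$ affine and further to $(\mathbb{A}^1,0)$ by flat base change along $\sqrt[n]{W}$, as in the proof of Proposition \ref{geommain1}. There both decompositions can be written explicitly in terms of the $\mu_n$-equivariant modules $k_m$ and $(k[T]/T^n)_m$. One also has to check that the semiorthogonality and fullness of the induced quotient functors glue from the local to the global situation; I expect this step to be routine.
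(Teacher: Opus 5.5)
Your proposal is correct and follows essentially the same route as the paper. Both use Proposition \ref{geommain1} and the Bodzenta--Donovan decomposition of $D^b\Coh(\sqrt[n]{(X,D)})$, together with its compatibility with the decomposition of $\Perf(\sqrt[n]{(X,D)})$ from \cite{bergh2016geometricity}, and then apply Lemma \ref{sodquotient} for $n$ factors, so that $p^*D^b\Coh(X)$ is killed and each remaining factor becomes $D^b\Coh(D)/\Perf(D)=D^b\Sing(D)$. The one difference is that the paper takes the compatibility directly from \cite{bodzenta2024root}, where both decompositions use the same embedding functors. Your proposed local check over $(\mathbb{A}^1,0)$ and the local-to-global gluing are therefore unnecessary, since Lemma \ref{sodquotient} is applied globally.
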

\begin{proof}
This follows easily from Proposition \ref{geommain1} and \cite{bodzenta2024root}. In particular, Theorem 1.2 from that paper gives a semi-orthogonal decomposition of $D^b\Coh(\sqrt[n]{(X, D)})$ given by 
$$
\Big \langle  D^b\Coh(D) \otimes \mathcal{O}((1-n)D_n),  \, \ldots, \,  D^b\Coh(D) \otimes \mathcal{O}(-D_n) , p^*D^b\Coh(X) \Big \rangle
    $$
As the authors explain in the comments right after the statement of Theorem 1.2, this extends to $D^b\Coh(\sqrt[n]{(X, D)})$  the semi-orthogonal decomposition on the category of perfect complexes of the root stack which had been constructed in \cite{bergh2016geometricity}. Indeed, as we have already mentioned, in \cite{bergh2016geometricity} the authors construct a semi-orthogonal decomposition for $\Perf(\sqrt[n]{(X, D)})$ with factors
$$
\Big \langle  \Perf(D) \otimes \mathcal{O}((1-n)D_n),  \, \ldots, \,  \Perf(D) \otimes \mathcal{O}(-D_n) , p^*D^b\Coh(X) \Big \rangle
    $$
Further, as explained in  \cite{bodzenta2024root}, 
the natural inclusion 
    $$
\Perf(\sqrt[n]{(X, D)}) \subset D^b\Coh(\sqrt[n]{(X, D)})
    $$
    is compatible with the semi-orthogonal decompositions on its source and on its target.  
That is, we are in the situation considered in Lemma \ref{sodquotient}. Note that we had stated the Lemma only for length two semi-orthogonal decompositions, but the conclusion holds for SODs with an arbitrary number of factors.

Using Proposition \ref{geommain1} we conclude that 
$$
\MF^n(X, W) \cong  D^b\Sing(\sqrt[n]{(X, D)}) = D^b\Coh(\sqrt[n]{(X, D)}) / \Perf(\sqrt[n]{(X, D)})
$$
admits a length $n-1$ semi-orthogonal decomposition with factors
$$
D^b\Coh(D)/\Perf(D) = D^b\Sing(D)
$$
 and this concludes the proof.
\end{proof}
\printbibliography
\end{document}